\def\NAT@spacechar{~}
\newcounter{subsection1}[section]
\newtheorem{defn}[subsection1]{Definition}
\newtheorem{lemma}[subsection1]{Lemma}
\newtheorem{conv}[subsection1]{Remark}
\newtheorem{remark}[subsection1]{Remark}
\newtheorem{example}[subsection1]{Example}
\newtheorem{normaltheorem}[subsection1]{Theorem}
\numberwithin{equation}{section} 
\numberwithin{subsection1}{section}
\newenvironment{proof}{\vspace{1ex}\noindent{\textsc{Proof:}}\hspace{0.5em}}{\hfill\qed\vspace{1ex}}
\DeclareMathOperator{\llip}{lip^{\downarrow}} 
\DeclareMathOperator{\bl}{bl}
\DeclareMathOperator{\freq}{freq}
\begin{document}

\def\ra{\Rightarrow} 
\def\to{\rightarrow} 
\def\iff{\Leftrightarrow}
\def\sw{\subseteq} 
\def\mc{\mathcal} 
\def\mb{\mathbb} 
\def\sc{\,\setminus} 
\def\p{\partial} 
\def\E{\mb{E}} 
\def\P{\mb{P}}
\def\R{\mb{R}} 
\def\Q{\mb{Q}}
\def\N{\mb{N}}
\def\~{\sim}
\def\-{\,;\,} 
\def\wt{\widetilde}
\def\qed{$\blacksquare$}
\def\1{\mathbbm{1}}
\def\cadlag{c\`{a}dl\`{a}g}
\def\CPP{{C\kern-.05em\raise.23ex\hbox{+\kern-.05em+}}}

\def\Ka{($\mathscr{K}1$)}
\def\Kb{($\mathscr{K}2$)}
\def\Kc{($\mathscr{K}4$)}
\def\Kg{($\mathscr{K}3$)}
\def\Ke{($\mathscr{K}5$)}

\def\slfv{S$\Lambda$FV}
\def\l{\left}
\def\r{\right}

\allowdisplaybreaks

\author{Nic Freeman\thanks{Mathematical Institute, Oxford University. Email: \textit{nicholas.freeman@st-annes.ox.ac.uk}}}
\title{The number of non-singleton blocks in $\Lambda$-coalescents with dust}
\date{\today \textit{ (draft)}}

\maketitle

\begin{abstract}
Let $\Lambda$ be a finite measure on $[0,1)$ and set $\mu^n=\int_0^1x^{n}\Lambda(dx)$. Recall that a block $b$ in a partition of $\N$ is said to be a singleton if $|b|=1$. It is known that the blocks of the $\Lambda$-coalescent are either singletons or infinite sets which comprise a non-trivial proportion of the total population. Let $N^s_t$ and $N^a_t$ denote the number of singleton blocks and non-singleton blocks, respectively, in the $\Lambda$-coalescent at time $t>0$.

\cite{P1999} proved that $N^s_t=\infty$ if $\mu^{-1}<\infty$, and that $N^s_t=0$ if $\mu^{-1}=\infty$. \cite{S2000} gave a necessary and sufficient condition for $N^a_t+N^s_t$ to be finite. Hence, when $\mu^{-1}=\infty$, Schweinsberg's result determines if $N^a_t$ is finite or infinite. In this paper we complete the picture and show that, when $\mu^{-1}<\infty$, a third dichotomy occurs; if $\mu^{-2}=\infty$ then $N^a_t=\infty$, and if $\mu^{-2}<\infty$ then $N^a_t<\infty$.

Our proof uses the connection between $\Lambda$-coalescents and stochastic flows of bridges, which was established by \cite{BL2003}. 
\end{abstract}


\fontsize{11.3pt}{15}
\selectfont



\section{Introduction}


The $\Lambda$-coalescent is a Markov process, whose state space is the set $\mc{P}_\N$ of partitions of $\N$. It can be thought of a system of particles, which start out separated and coagulate together over time. The $\Lambda$-coalescent generalizes the well known coalescent of \cite{K1982}, and was introduced by \cite{DK1999}, \cite{P1999} and \cite{S1999} (all of which appeared independently). The name `$\Lambda$-coalescent' comes from the formulation of \cite{P1999}, which we now describe.

Let $\mc{P}_n$ denote the set of partitions of $\{1,2,\ldots,n\}$. Let $\iota_n$ denote the natural restriction map $\iota_n:\mc{P}_\N\to\mc{P}_n$, which is defined by simply removing all elements $m\in \N\sc\{1,\ldots,n\}$ from a partition of $\N$. For example, $\iota_3(\{(1,2,6),(3,5),(4),\ldots\})=\{(1,2),(3)\}$. Let $1^\N$ denote the partition of $\N$ into singletons.

If $\pi$ is a partition of $\N$, then each element of $\pi$ is known as a \textit{block}. We write $n\stackrel{\pi}{\sim}{m}$ to mean that $n$ and $m$ are in the same block of $\pi$. If $\pi=\{b_1,b_2,\ldots b_l\}$ (resp. $\pi=\{b_i\-i\in\N\}$) and $I\sw \{1,\ldots,l\}$ (resp $I\sw\N$) then the partition obtained from $\pi$ by \textit{merging} $\{b_i\-i\in I\}$ is given by $\{b_i\-i\notin I\}\cup\{\cup_{i\in I} b_i\}$ .

\begin{defn}\label{lambdacoaldef} Let $\Lambda$ be a finite measure on $[0,1]$. The $\Lambda$-coalescent is a $\mc{P}_\N$-valued Markov process $(\Pi_t)_{t\geq 0}$ such that, for all $n\in\N$, $\Pi^{(n)}_t=\iota_n(\Pi_t)$ is a $\mc{P}_n$-valued Markov chain with initial state $1^\N$ and the following dynamics: Whenever $\Pi^{(n)}_t$ is a partition consisting of $i$ blocks, the rate at which any $k$-tuple of blocks merges is
\begin{equation}
\lambda_{i,k}=\int_0^1 x^{k-2}(1-x)^{i-k}\Lambda(dx),\label{iii14}
\end{equation}
independently of all other $k$-tuples. 
\end{defn}

It is natural to regard the $\Lambda$-coalescent as a system of particles (one for each $n\in\N$), which start out separated and which merge together over time. The precise formulation of Definition \ref{lambdacoaldef} is due to \cite{P1999}. 

If $\Lambda(\{0\})=0$, the formula \eqref{iii14} is more intuitively written as $\lambda_{b,k}=\int_0^\infty x^{k}(1-x)^{i-k}x^{-2}\Lambda(dx)$. The term $x^{-2}\Lambda(dx)$ corresponds to a measure controlling the rate at which a proportion $x\in (0,1]$ of the blocks currently present merge to form a new block. When this occurs we call it a \emph{coagulation event}. The remaining `binomial' term $x^k(1-x)^{i-k}$ says that, of the first $i$ blocks, each block chooses independently whether to become part of the new block, or remain alone (with probabilities $x$ and $1-x$ respectively). 

\begin{remark}
The terminology `coagulation event' comes from thinking of the $\Lambda$-coalescent as a system of particles. The particles, one for each $n\in\N$, start out separated and and coagulate together over time; a merger of blocks corresponds to a group of particles coagulating, forming a single block/particle.
\end{remark}

If $\Lambda(\{1\})>0$, then corresponding events occur at rate $\Lambda(\{1\})$, which coagulate the whole population into a single block. From a theoretical point of view, this adds no extra complexity and serves only to obfuscate the behaviour which we are trying to capture. Results concerning $\Lambda$-coalescents for which $\Lambda(\{1\})=0$ are easily extended to general $\Lambda$, by simply superimposing the extra coagulation events. With this in mind: 

\begin{conv}
In this article, without further comment, we consider only $\Lambda$ for which $\Lambda(\{1\})=0$.
\end{conv}

If $f:\N\to\N$ is a bijection, and $\pi\in\mc{P}_n$ (where $(b_i)$ is possibly a finite sequence), then we define $f(\pi)=\{f(b)\-b\in \pi\}$, where $f(b)=\{f(k)\-k\in b\}$. Recall that a (random) partition $\pi\in\mc{P}_\N$ is said to be \textit{exchangeable} if, for any permutation $\sigma$ of $\N$, $\sigma(\pi)$ has the same distribution as $\pi$. \cite{P1999} showed that, for any $t>0$, $\Pi_t$ is an exchangeable partition.

The following lemma collects together some results, due to Kingman, which can be found in Section 2.2 of \cite{B2006}. For a finite set $A$, $|A|$ denotes the cardinality of $A$. If $A$ is infinite then we set $|A|=\infty$.

\begin{lemma}\label{exchparts} Let $\pi$ be an exchangeable partition of $\N$. Then, almost surely, for every block $b$ of $\pi$, the limit
$$\freq(b)=\lim\limits_{k\to\infty}\frac{|b\cap \{1,\ldots, k\}|}{k}$$
exists. The quantity $\freq(b)$ is known as the \emph{asymptotic frequency} of $b$. It holds that $\sum_{b\in\pi} \freq(b)\leq 1$, and also that
$$\lim\limits_{k\to\infty}\frac{|\{n=1,\ldots,k\-\{n\}\in\pi\}|}{k}\stackrel{a.s.}{=}1-\sum_{b\in\pi} \freq(b)$$
Further, almost surely, if $\freq(b)=0$ then $b$ is a singleton. Almost surely, $\pi$ has no singletons if and only if $\sum_{b\in\pi}\freq(b)=1$.
\end{lemma}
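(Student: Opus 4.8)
The plan is to reduce everything to Kingman's paintbox representation, which identifies the law of an exchangeable partition $\pi$ with a mixture of ``paintbox'' partitions. Concretely, I would first invoke the representation theorem to produce a random mass-partition $\mathbf{s}=(s_1\geq s_2\geq\cdots)$ with $s_j\geq 0$ and $\sum_j s_j\leq 1$, together with an i.i.d.\ sequence $(U_n)_{n\in\N}$ of uniform variables on $[0,1]$, such that if $[0,1)$ is split into consecutive intervals $I_1,I_2,\ldots$ of lengths $s_1,s_2,\ldots$ and a leftover ``dust'' set $D$ of length $1-\sum_j s_j$, then $n\stackrel{\pi}{\sim}m$ holds precisely when $U_n,U_m$ lie in a common $I_j$ (with $j\geq1$), while every label falling in $D$ forms a singleton. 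Since all the assertions of the lemma are a.s.\ statements about measurable functionals of $\pi$, and this paintbox has the same law as $\pi$, it suffices to verify each of them, with probability one, for the paintbox realization.

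With this in hand the frequencies are computed by the strong law of large numbers. For each $j$ with $s_j>0$ the block $B_j=\{n:U_n\in I_j\}$ satisfies $\frac1k|B_j\cap\{1,\ldots,k\}|=\frac1k\sum_{n=1}^k\1[U_n\in I_j]\to|I_j|=s_j$ almost surely, so $\freq(B_j)=s_j$; the same SLLN shows each such $B_j$ is infinite. Labels landing in $D$ are singletons, of frequency $0$. Summing gives $\sum_{b\in\pi}\freq(b)=\sum_j s_j\leq1$. (One could instead obtain the inequality directly from Fatou applied to the identity $\sum_b|b\cap\{1,\ldots,k\}|=k$, but the paintbox hands us the exact value.)

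For the remaining claims I would argue as follows. The number of singletons in $\{1,\ldots,k\}$ is exactly $\sum_{n=1}^k\1[U_n\in D]$, so SLLN yields $\frac1k|\{n\leq k:\{n\}\in\pi\}|\to|D|=1-\sum_j s_j=1-\sum_b\freq(b)$ almost surely. A block of frequency $0$ cannot be one of the infinite $B_j$ (those have frequency $s_j>0$), and since distinct labels are a.s.\ distinct no two dust points are ever merged; hence any zero-frequency block is a singleton. Finally, $\pi$ has no singletons iff no label falls in $D$, which a.s.\ happens iff $|D|=0$, i.e.\ iff $\sum_b\freq(b)=\sum_j s_j=1$.

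The genuinely delicate point, and the reason I prefer the paintbox route, is the singleton-proportion identity. If one instead proves existence of frequencies block-by-block---fixing $n$ and applying de Finetti to the exchangeable $0/1$ sequence $(\1[m\stackrel{\pi}{\sim}n])_m$, then taking a countable union over $n$---one still has to justify the interchange $\lim_k\sum_{b:|b|\geq2}\frac1k|b\cap\{1,\ldots,k\}|=\sum_{b:|b|\geq2}\freq(b)$, which needs a uniform-integrability or dominated-convergence argument because the sum ranges over infinitely many blocks. Working inside the paintbox replaces this interchange with a single application of SLLN to the dust indicators, sidestepping the obstacle entirely.
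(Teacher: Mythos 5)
Your proof is correct. Note that the paper itself offers no proof of this lemma: it is quoted as a collection of Kingman's results from Section 2.2 of the cited reference (Bertoin, 2006), and your paintbox argument --- reduce to a paintbox with the same law, apply the strong law of large numbers block-by-block to the indicators $\1[U_n\in I_j]$ and once more to the dust indicators $\1[U_n\in D]$ --- is exactly the standard proof given there. Your closing remark correctly identifies why the singleton-proportion identity is the only genuinely delicate claim and why the paintbox formulation dissolves the limit/sum interchange that a block-by-block de Finetti argument would require.
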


\begin{remark}
For a general subset $b\sw\N$, there is no reason for the limit $\freq(b)$ to exist. 
\end{remark}

Let us introduce some terminology, in the notation of the above lemma. If $\freq(b)>0$, then the elements of $b$ comprise a non-zero proportion $\freq(b)$ of $\N$, and we refer to $b$ as an \emph{atomic block} of $\pi$. This terminology reflects the idea that an atomic block corresponds to an atom of the probability measure which records the proportion of the population within each block.

Each singleton $\{n\}$ comprises only a null proportion of $\N$, in that $\freq(\{n\})=0$. The set of singletons of $\pi$ is called the \emph{dust} of $\pi$, and the elements of this set comprise a proportion of $\N$ which is (possibly $0$) and given by
$$\mathscr{D}(\pi)=1-\sum_{b\in\pi}\freq(b).$$

\cite{P1999} proved the following result, which establishes a dichotomy in the behaviour of the dust. Let
\begin{equation}\label{mun}
\mu^{n}=\int_0^1x^n\Lambda(dx).
\end{equation}

\begin{normaltheorem}[\citealt{P1999}]\label{pitmansthm}
If $\mu^{-1}<\infty$, then $\P\l[\forall t>0, \mathscr{D}(\Pi_t)>0\r]=1$, whereas if $\mu^{-1}=\infty$ then $\P\l[\forall t>0, \mathscr{D}(\Pi_t)=0\r]=1$. 
\end{normaltheorem}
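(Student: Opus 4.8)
The plan is to reduce the statement, via the monotonicity of $t\mapsto\mathscr{D}(\Pi_t)$, to a fixed-time dichotomy, and then to compute $\mathscr{D}(\Pi_t)$ explicitly from Pitman's Poissonian construction of the coalescent. Observe first that the set of integers $\{n:\{n\}\in\Pi_t\}$ is almost surely non-increasing in $t$ (a block that has merged never splits again), so by Lemma \ref{exchparts} the asymptotic frequency of singletons, namely $\mathscr{D}(\Pi_t)$, is almost surely non-increasing in $t$. Consequently it suffices to prove, for each \emph{fixed} $t>0$, that $\mathscr{D}(\Pi_t)>0$ a.s. when $\mu^{-1}<\infty$ and that $\mathscr{D}(\Pi_t)=0$ a.s. when $\mu^{-1}=\infty$; the statement for all $t$ simultaneously then follows by intersecting over a sequence $t_n\to\infty$ (in the first case) or $t_n\to 0$ (in the second case) and invoking monotonicity.

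For the fixed-time claim I would work with the driving Poisson point process $\mc{M}$ of coagulation events: a Poisson point process on $(0,\infty)\times(0,1)$ with intensity $ds\,x^{-2}\Lambda(dx)$, at whose atom $(t_i,x_i)$ each block present is retained in the merger independently with probability $x_i$, and where distinct integers carry independent selection coins. Put $p_t=\prod_{t_i\le t}(1-x_i)\in[0,1]$. The crux is the identity
\[
\mathscr{D}(\Pi_t)=p_t\qquad\text{a.s.}
\]
Granting it, the dichotomy is a standard fact about Poisson sums: $\sum_{t_i\le t}\l(-\log(1-x_i)\r)$ is almost surely finite iff its intensity integrates $y\wedge 1$, i.e. iff $\int_0^1 (x\wedge 1)\,x^{-2}\Lambda(dx)=\mu^{-1}<\infty$, in which case it defines a subordinator $\xi_t$ and $p_t=e^{-\xi_t}\in(0,1)$; whereas if $\mu^{-1}=\infty$ the sum is almost surely $+\infty$ and $p_t=0$ for every $t>0$.

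The identity splits into two inequalities. The lower bound $\mathscr{D}(\Pi_t)\ge p_t$, which settles the case $\mu^{-1}<\infty$, is easy: if integer $n$ is never selected at any atom in $(0,t]$ then $\{n\}$ is still a block of $\Pi_t$, and, conditional on $\mc{M}$, these events have probability $p_t$ and are independent across $n$; the conditional strong law together with Lemma \ref{exchparts} shows that the frequency of singletons is at least the frequency of never-selected integers, which is $p_t$. The upper bound $\mathscr{D}(\Pi_t)\le p_t$, which settles the case $\mu^{-1}=\infty$, is the delicate half: a singleton $\{n\}$ of $\Pi_t$ must, at \emph{every} atom at which it is selected, have been the \emph{unique} selected block, and I would argue that this confines $n$ to the never-selected set up to a frequency-zero exceptional set, because at each coagulation time there are almost surely infinitely many blocks present, so the conditional probability that a selected block is alone is zero.

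I expect the genuine obstacle to be exactly this last point, namely ruling out the ``selected but alone'' configurations, equivalently establishing that infinitely many blocks are present immediately before each coagulation time. This is where the two regimes must be reconciled with the block-count (coming-down-from-infinity) dichotomy: while the coalescent carries dust it has infinitely many singleton blocks and the claim is immediate, whereas if it has instead come down to finitely many blocks then those blocks are atomic and $\mathscr{D}(\Pi_t)=0$ holds outright, matching $p_t=0$. Making this reconciliation rigorous -- for instance by an induction over the atoms of $\mc{M}$ that controls the number of surviving singletons among $\{1,\dots,k\}$, or by invoking the block-count dichotomy directly -- is the step I would expect to demand the most care; the remaining ingredients (Borel--Cantelli and three-series type statements for $\sum_i x_i$, and the conditional strong law) are routine.
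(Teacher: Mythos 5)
The paper gives no proof of this statement to compare against: it is quoted as a background result of \cite{P1999} (Theorem 8 there), so your attempt can only be measured against the standard argument, which it essentially reproduces. Your skeleton is sound: monotonicity of the singleton set reduces everything to a fixed time $t$; Pitman's Poissonian construction identifies $\mathscr{D}(\Pi_t)$ with $p_t=\prod_{t_i\le t}(1-x_i)$; and Campbell's theorem for the Poisson sum $\sum_{t_i\le t}x_i$ (whose intensity integral is $t\int_0^1 x\cdot x^{-2}\Lambda(dx)=t\mu^{-1}$) converts the dichotomy in $\mu^{-1}$ into positivity or vanishing of $p_t$. You have also correctly located the one delicate step, the inequality $\mathscr{D}(\Pi_t)\le p_t$, and it resolves a little more cleanly than you fear, without invoking the coming-down-from-infinity dichotomy: since $s\mapsto|\Pi_s|$ is non-increasing pathwise, on the event $\{|\Pi_t|=\infty\}$ infinitely many blocks are present just before every atom in $(0,t]$, so for a fixed atom and a fixed integer the event ``selected but sole selected block'' is null (infinitely many independent coins of success probability $x_i>0$ almost surely yield a second success), and the union over the countably many atoms and integers remains null; hence on this event every singleton of $\Pi_t$ lies in the never-selected set, whose frequency is $p_t$. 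On the complementary event $\{|\Pi_t|<\infty\}$ there are only finitely many singletons, so $\mathscr{D}(\Pi_t)=0\le p_t$ directly from Lemma \ref{exchparts}. Three points of hygiene remain for a complete write-up: the equivalence of $\sum_{t_i\le t}(-\log(1-x_i))<\infty$ with $\mu^{-1}<\infty$ uses that $\nu$ is finite away from $0$ and that $\Lambda(\{1\})=0$, so the singularity of $-\log(1-x)$ at $x=1$ is harmless; the independent-coins construction on all of $\N$ must be justified through the consistent restrictions $\Pi^{(n)}$, for which only atoms with at least two successes among the first $n$ coins matter and these form a locally finite set; and a possible Kingman atom $\Lambda(\{0\})>0$ lies outside your Poissonian picture, though it forces $\mu^{-1}=\infty$ and is handled by noting that each singleton then merges at infinite total rate whenever infinitely many blocks are present. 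With these supplied, your argument is correct.
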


Let
$$N^s_t=|\{b\in\Pi_t\-\freq(b)=0\}|,$$
which, in words, is the number of singletons of $\Pi_t$. By Lemma \ref{exchparts} (and the fact that $N^s_u\leq N^s_v$ when $u\geq v$), Theorem \ref{pitmansthm} implies (almost surely) that $N^s_t=\infty$ or $N^s_t=0$, and that $\mathscr{D}(\Pi_t)>0$ if and only if $N^s_t=\infty$. 

A second dichotomy, this time in the behaviour of the total number of blocks of $\Pi_t$, was proven by \cite{S2000}. Let
$$\mu^*=\sum_{i=2}^\infty\l(\sum_{k=2}^i(k-1)\binom{i}{k}\lambda_{i,k}\r)^{-1}.$$

\begin{normaltheorem}[\citealt{S2000}]\label{sthm}
If $\mu^*<\infty$ then $\P\l[|\Pi_t|<\infty\r]=1$, whereas if $\mu^*=\infty$ then $\P\l[|\Pi_t|=\infty\r]=1$.
\end{normaltheorem}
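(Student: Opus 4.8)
The plan is to reduce the statement to a one-dimensional question about the block-counting process and to analyse it with a Lyapunov (Foster--Lyapunov) function built from the summands of $\mu^*$. For each $n$ the restricted process $\Pi^{(n)}_t=\iota_n(\Pi_t)$ is a Markov chain on $\mc{P}_n$, and its number of blocks $Y^{(n)}_t=|\Pi^{(n)}_t|$ is itself a continuous-time Markov chain on $\{1,\dots,n\}$: from state $i$ it jumps to $i-(k-1)$ at rate $\binom{i}{k}\lambda_{i,k}$, because a merger of a $k$-tuple destroys $k-1$ blocks. In particular the mean downward drift from state $i$ is exactly $\gamma_i:=\sum_{k=2}^{i}(k-1)\binom{i}{k}\lambda_{i,k}$, so that $\mu^*=\sum_{i\geq 2}\gamma_i^{-1}$. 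Using the integral representation $\gamma_i=\int_0^1 x^{-2}\big(ix-1+(1-x)^i\big)\Lambda(dx)$ one checks that $\gamma_i$ is non-decreasing in $i$ (the integrand increases in $i$). Since the chains $Y^{(n)}$ are consistently coupled and $|\Pi_t|=\lim_{n\to\infty}Y^{(n)}_t$ is an increasing limit, it suffices to control, uniformly in $n$, the hitting times $\tau^b_n:=\inf\{t:Y^{(n)}_t\leq b\}$ of a fixed finite level $b$ and then pass to the limit.

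For the direction $\mu^*<\infty$ I would introduce $h(i)=\sum_{j=2}^{i}\gamma_j^{-1}$, so $h(n)\leq\mu^*$. Writing $\mc{L}$ for the generator of $Y^{(n)}$ and using that $\gamma$ is non-decreasing, the telescoping bound $h(i)-h(i-k+1)=\sum_{j=i-k+2}^{i}\gamma_j^{-1}\geq (k-1)\gamma_i^{-1}$ gives
\[\mc{L}h(i)=\sum_{k=2}^{i}\binom{i}{k}\lambda_{i,k}\big(h(i-k+1)-h(i)\big)\leq -\gamma_i^{-1}\sum_{k=2}^{i}(k-1)\binom{i}{k}\lambda_{i,k}=-1.\]
Hence $h(Y^{(n)}_t)+t$ is a supermartingale, and optional stopping at $\tau^b_n$ (with the analogous truncated function $h_b$ vanishing on $\{i\leq b\}$, for which the same computation gives $\mc{L}h_b\leq-1$ on $\{i>b\}$) yields $\E[\tau^b_n]\leq\sum_{j>b}\gamma_j^{-1}=:r_b$. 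By monotone convergence $\E[\tau^b_\infty]\leq r_b$, so by Markov's inequality $\P[|\Pi_t|>b]=\P[\tau^b_\infty>t]\leq r_b/t$; letting $b\to\infty$ (so $r_b\to 0$) gives $\P[|\Pi_t|<\infty]=1$, and indeed coming down from infinity for all $t>0$.

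For the converse $\mu^*=\infty$ I would aim at a matching lower bound $\E[\tau^b_n]\to\infty$. The exact compensator gives $h(n)-\E[h(Y^{(n)}_{\tau^b_n})]=\E\big[\int_0^{\tau^b_n}\beta_{Y_s}\,ds\big]$ with $\beta_i:=-\mc{L}h(i)=\sum_{k=2}^{i}\binom{i}{k}\lambda_{i,k}\sum_{j=i-k+2}^{i}\gamma_j^{-1}\geq 1$. If $\sup_i\beta_i=:C<\infty$ then $\E[\tau^b_n]\geq (h(n)-h(b))/C\to\infty$ since $h(n)\to\infty$; together with the fact that coming down from infinity is a $0/1$ event equivalent to $\E[\tau^b_\infty]<\infty$, this forces $|\Pi_t|=\infty$ for all $t>0$.

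The main obstacle is exactly the control of $\beta_i$ in this last step: whereas small mergers contribute $\approx 1$ to $\beta_i$, a rare but large coagulation event (a $k$-tuple with $k$ comparable to $i$) contributes $\sum_{j=i-k+2}^{i}\gamma_j^{-1}$, which can exceed $(k-1)\gamma_i^{-1}$ by a lot because $\gamma$ is only increasing, not bounded below on the traversed range. In other words, the difficulty is to rule out the process ``coming down'' through overshooting large jumps faster than the drift $\gamma_i$ predicts. I would handle this by replacing $h$ with a smoothed or truncated Lyapunov function whose increments are flat on the scale of a typical jump (equivalently, by discarding the contribution of atypically large mergers and comparing the descent across dyadic block-scales $i\in[2^m,2^{m+1})$), so that the effective drift stays $O(1)$ and the lower bound $\E[\tau^b_n]\gtrsim\sum_{b<j\leq n}\gamma_j^{-1}$ survives. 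Verifying that the neglected large mergers are genuinely negligible, using the finiteness of $\Lambda$ and the precise form of $\lambda_{i,k}$, is where the real work lies.
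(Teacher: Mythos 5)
First, a point of reference: the paper does not prove this statement at all --- Theorem \ref{sthm} is quoted from \cite{S2000} and used as a black box, so there is no internal proof to compare against; your proposal must stand on its own. Its setup is correct and the sufficiency half is essentially the standard route: the block-counting chain jumps from $i$ to $i-k+1$ at rate $\binom{i}{k}\lambda_{i,k}$, the drift $\gamma_i=\sum_{k=2}^i(k-1)\binom{i}{k}\lambda_{i,k}$ is non-decreasing (your integral representation checks out), and $\mc{L}h\leq-1$ for $h(i)=\sum_{j=2}^i\gamma_j^{-1}$, giving $\E[\tau^1_n]\leq\mu^*$ uniformly in $n$. But two of your specific claims fail as stated. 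The truncated function $h_b$ does \emph{not} satisfy $\mc{L}h_b\leq-1$ on $\{i>b\}$: a merger overshooting from $i>b$ to $i-k+1\leq b$ decreases $h_b$ by only $\sum_{j=b+1}^{i}\gamma_j^{-1}$, a sum of $i-b$ terms, and $i-b$ can be much smaller than $k-1$, so the telescoping lower bound $(k-1)\gamma_i^{-1}$ is lost. Consequently you do not get $\E[\tau^b_n]\leq r_b$ with $r_b\to0$; you only get $\E[\tau^b_n]\leq\mu^*$, and for a fixed small $t$ the resulting bound $\P[|\Pi_t|>b]\leq\mu^*/t$ does not tend to $0$ in $b$. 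To conclude $\P[|\Pi_t|<\infty]=1$ for \emph{every} $t>0$ you must invoke the zero--one dichotomy (either $|\Pi_t|<\infty$ for all $t>0$ a.s.\ or $|\Pi_t|=\infty$ for all $t>0$ a.s., Proposition 23 of \citealt{P1999}), which you also lean on, unproven, at the end of the converse via the asserted equivalence of coming down from infinity with $\E[\tau^b_\infty]<\infty$.

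The more serious gap is the converse itself. You correctly identify that $\beta_i=-\mc{L}h(i)$ need not be bounded, because a single large merger traverses many levels $j$ at which $\gamma_j^{-1}$ is large, and you then defer the resolution (``a smoothed or truncated Lyapunov function'', ``dyadic block-scales'') while conceding that this is ``where the real work lies''. That deferred step is the entire content of the hard half of Schweinsberg's theorem: one must show that when $\sum_i\gamma_i^{-1}=\infty$ the process cannot come down from infinity by overshooting, which requires a genuine quantitative estimate on the descent across geometric scales using the finiteness of $\Lambda$ and the precise form of $\lambda_{i,k}$. As written, the proposal establishes (modulo the repairs above) only the direction $\mu^*<\infty\Rightarrow$ coming down from infinity, and supplies a programme rather than a proof for $\mu^*=\infty\Rightarrow\P[|\Pi_t|=\infty]=1$; it therefore cannot be accepted as a proof of the stated dichotomy.
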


It can be shown that $\mu^{-1}<\infty$ implies that $\mu^*=\infty$; in other words, $\Lambda$-coalescents with a non-empty dust component have infinitely many blocks for all time (of course, we already knew this since non-empty dust implies $N^s_t=\infty$). 
Recall that the $\Lambda$-coalescent is said to \emph{come down from infinity} if $\P\l[\forall t>0, |\Pi_t|<\infty\r]=1$. Theorem \ref{sthm} shows that the $\Lambda$-coalescent comes down from infinity if and only if $\mu^*<\infty$.

The main result of this article is to establish a third (and, in some sense, final) dichotomy which occurs for $\Lambda$-coalescents. Let
\begin{equation*}\label{Nat}
N^a_t=|\{b\in\Pi_t\-\freq(b)>0\}|,
\end{equation*}
which is the number of atomic blocks of $\Pi_t$. Note that $$N^a_t+N^s_t=|\Pi_t|.$$ 

If $\mu^{-1}=\infty$ then (see above), $\Pi_t$ has no singletons, and Theorem \ref{sthm} tells us that $N^a_t=\infty$ when $\mu^*=\infty$ and $N^a_t<\infty$ when $\mu^*<\infty$. The behaviour of $N^a_t$, in the case $\mu^{-1}<\infty$, is described by the following theorem.

\begin{normaltheorem}\label{uscthm}
Suppose that $\mu^{-1}<\infty$. If $\mu^{-2}=\infty$ then $\P\l[\forall t>0, N^a_t=\infty\r]=1$, whereas if $\mu^{-2}<\infty$ then $\P\l[\forall t>0, N^a_t<\infty\r]=1.$
\end{normaltheorem}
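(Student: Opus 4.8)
The plan is to use the Bertoin--Le Gall flow of bridges (\cite{BL2003}) to convert the count of atomic blocks into a tractable birth-and-death picture, and then to analyse that picture in the two regimes. Under the flow construction $\Pi_t$ is read off from a bridge $B_{0,t}$ whose jumps are exactly the atomic blocks, so $N^a_t$ is the number of jumps of $B_{0,t}$. The flow is driven by a Poisson point process of coagulation events on $(0,\infty)\times(0,1)\times[0,1]$ with intensity $ds\,x^{-2}\Lambda(dx)\,du$; at an event of size $x$ each block present is independently selected with probability $x$, and the selected blocks together with a proportion $x$ of the dust merge into one new block. Two numerical facts drive everything: the total rate of coagulation events is $\int_0^1 x^{-2}\Lambda(dx)=\mu^{-2}$, whereas any fixed block is selected, and hence potentially destroyed, at rate $\int_0^1 x\cdot x^{-2}\Lambda(dx)=\mu^{-1}$. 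Since $\mu^{-1}<\infty$, Theorem~\ref{pitmansthm} gives $\mathscr{D}(\Pi_t)>0$ for all $t$, so the proportion $x$ of dust swept up at each event has strictly positive frequency and every coagulation event produces a genuine atomic block. I would record two elementary consequences: each event changes $N^a$ by at most $+1$ (it creates at most one block), and a block, once born, remains a distinct atomic block until the first time it is itself selected. Heuristically $N^a_t$ then has drift $\mu^{-2}-N^a_t\,\mu^{-1}$, with a finite stable value $\mu^{-2}/\mu^{-1}$ when $\mu^{-2}<\infty$ but drift $+\infty$ at every finite level when $\mu^{-2}=\infty$; making this mechanism rigorous is the whole game.

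The regime $\mu^{-2}<\infty$ is immediate. The coagulation events then form a Poisson process of finite rate, so on any $[0,T]$ there are only finitely many of them. Since $N^a_0=0$ and each event increases $N^a$ by at most one, $N^a_t\le\#\{\text{events in }(0,t]\}<\infty$ for all $t$ simultaneously.

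For $\mu^{-2}=\infty$ (where, as $\Lambda$ is finite, the divergence necessarily comes from small $x$, i.e.\ $\int_0^\delta x^{-2}\Lambda(dx)=\infty$ for every $\delta$), the aim is to exhibit infinitely many atomic blocks that are simultaneously alive. Fix $\delta\in(0,1)$. For each event $e=(s_e,x_e,u_e)$ with $x_e\le\delta$ let $B_e$ be the block it creates and $d_e$ the first time after $s_e$ at which $B_e$ is selected. On $[s_e,d_e)$ the block $B_e$ is never selected, hence never merged, so it is a fixed distinct atomic block; and if $s_{e_1}<s_{e_2}\le t<d_{e_1}\wedge d_{e_2}$ then $B_{e_1}$ is not selected at $e_2$, so $B_{e_1}$ and $B_{e_2}$ are disjoint and thus distinct at time $t$. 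Hence $N^a_t\ge D_t:=\#\{e:x_e\le\delta,\ s_e\le t<d_e\}$. The lifetimes $\ell_e=d_e-s_e$ are i.i.d.\ $\mathrm{Exp}(\mu^{-1})$ (a block is selected at rate $\mu^{-1}$, and selections of distinct blocks at a common event are independent), independent of the birth process, so $D_t$ counts the random intervals $[s_e,s_e+\ell_e)$ that cover $t$. Thinning to the long-lived births with $\ell_e>\rho$, these occur at rate $e^{-\mu^{-1}\rho}\int_0^\delta x^{-2}\Lambda(dx)=\infty$; a Poisson process of locally infinite intensity almost surely has infinitely many points in every open interval, so for each $t>0$ the interval $(\max(0,t-\rho),t)$ contains infinitely many long-lived births, each of which covers $t$. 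Therefore $D_t=\infty$, whence $N^a_t=\infty$, and this holds simultaneously for all $t>0$.

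I expect the main obstacle to be the rigorous justification of the lower bound $N^a_t\ge D_t$ in the hard regime. Creating atoms is cheap, since events occur at the infinite rate $\mu^{-2}$; the delicate points are (i) \emph{decoupling} the death times $d_e$ from the intricate global dependence of the coalescent so that the lifetimes are genuinely i.i.d.\ exponentials, which rests on the exchangeability built into the flow of bridges together with $\mathscr{D}(\Pi_t)>0$; (ii) the \emph{distinctness} of the surviving blocks, which must be handled with care because a later event can absorb an earlier block; and (iii) promoting the conclusion from a single fixed $t$ to all $t>0$ at once, for which I would lean on the fact that the long-lived births form an infinite-intensity Poisson process and hence meet every open interval. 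The genealogical bookkeeping in (i)--(ii) is where I anticipate the real work, and where the precise bridge formalism of \cite{BL2003} is needed to replace the heuristic drift computation by an honest argument.
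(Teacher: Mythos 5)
Your mechanism is the right one, and it is in fact the same mechanism that drives the paper's proof: new atomic blocks are born whenever a coagulation event bites into the dust, births occur at the infinite rate $\nu([0,1])=\mu^{-2}$, and each newborn block persists with probability bounded below because $\mu^{-1}<\infty$. The easy direction ($\mu^{-2}<\infty$) is handled the same way in both arguments and is fine. But in the hard direction there are two genuine gaps. First, the concrete probabilistic claim you lean on is false as stated: the lifetimes $\ell_e$ are \emph{not} i.i.d.\ $\mathrm{Exp}(\mu^{-1})$ independent of the birth process, because all blocks alive at a given time are exposed to the same stream of coagulation events; their selection indicators are independent only \emph{conditionally} on the skeleton $\{(s_e,x_e)\}$. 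Consequently the ``long-lived births'' do not form a Poisson process, and the step ``a Poisson process of locally infinite intensity a.s.\ has infinitely many points in every open interval'' is not available. This is repairable (condition on the skeleton, note each block born at $s_e$ survives unselected to time $t$ with conditional probability at least $\prod_{s_{e'}\le t}(1-x_{e'})$, which is a.s.\ positive precisely because $\mu^{-1}<\infty$, and apply the second Borel--Cantelli lemma to the conditionally independent survival indicators, over a countable family of rational intervals to get all $t$ at once), but the repair is not in your write-up and is exactly where the content lies.

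Second, and more fundamentally, your argument manipulates the infinite-rate object pathwise --- ``the block created by event $e$'', ``the first time after $s_e$ at which $B_e$ is selected'', ``$N^a_t$ is the number of jumps of $B_{0,t}$'' --- whereas in the framework of \cite{BL2003} that you invoke, $B^{(\infty)}_{0,t}$ exists only as a limit in finite-dimensional distributions of finite compositions of simple bridges; when $\nu([0,1])=\infty$ the infinite composition does not converge pathwise in any sense you can use off the shelf. The paper's entire technical apparatus exists to bridge exactly this gap: it runs your birth-and-survival argument inside the \emph{finite} bridges $B^{(N_m)}_{0,t}$ (Lemma \ref{lotsofholes}, where a ``hole'' is born each time the new jump location lands in the dust and never dies --- it only shrinks by a factor $\ge(1-x)$, with total shrinkage bounded below by $D_t>0$, so no death-time bookkeeping is needed at all), passes to the limit via the Portmanteau theorem applied to the closed sets $\mathcal{B}(N,\epsilon)$ to get Theorem \ref{dettimethm} for a fixed deterministic $t$, and then upgrades to all $t>0$ simultaneously by the embedded-coalescent argument of Theorem \ref{lsub} (the non-singleton blocks at time $T$ seed a fresh $\Lambda$-coalescent that does not come down from infinity). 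An alternative rigorous home for your pathwise argument would be Pitman's Poissonian construction on $\{0,1\}^{\N}$-valued marks from \cite{P1999}, which is an honest almost-sure construction of the $\mathcal{P}_\N$-valued process; but you would need to say so and verify that ``each block is selected independently with probability $x$'' holds there. As written, the proposal identifies the hard points correctly but does not close them, and the one quantitative claim offered in their place is incorrect.
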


With Theorems \ref{pitmansthm}-\ref{uscthm} in hand, the qualitative behaviour of $N^a$ and $N^s$ can be completely classified, as in Figure \ref{lcoalfig}. 

It is surprising that Theorem \ref{uscthm} was not discovered until now; in fact we were unable to find even a single example in the literature of a $\Lambda$-coalescent which was shown to have both $\mathscr{D}(\Pi_t)>0$ and $N^a_t=\infty$. The question `are there $\Lambda$-coalescents with $\mathscr{D}_t>0$ and $N^a_t=\infty$' naturally emerged out of the analysis in \cite{F2011a}, see Remark \ref{frem}. We will discuss our proof of Theorem \ref{uscthm} in Section \ref{proofoutline}. See Example \ref{betaex} for an application of Theorem \ref{uscthm} to $\beta$-coalescents.

\begin{remark}\label{frem}
In \cite{F2011a}, a natural extension of the $\Lambda$-coalescent is defined in a spatial continuum, and its behaviour is classified in a similar manner to that of Theorems \ref{pitmansthm}-\ref{uscthm}. It is shown there that the introduction of space enriches the behaviour of the $\Lambda$-coalescent. For example, if $p_t$ is the probability that the coalescent of \cite{F2011a} has finitely many blocks at time $t$, it is sometimes the case that, for some deterministic $t_0\in(0,\infty)$, $p_t>0$ for $t<t_0$ and $p_t=0$ for $t\geq t_0$.
\end{remark}

\begin{figure}[t]
\begin{center} 
\includegraphics[height=3in,width=5in]{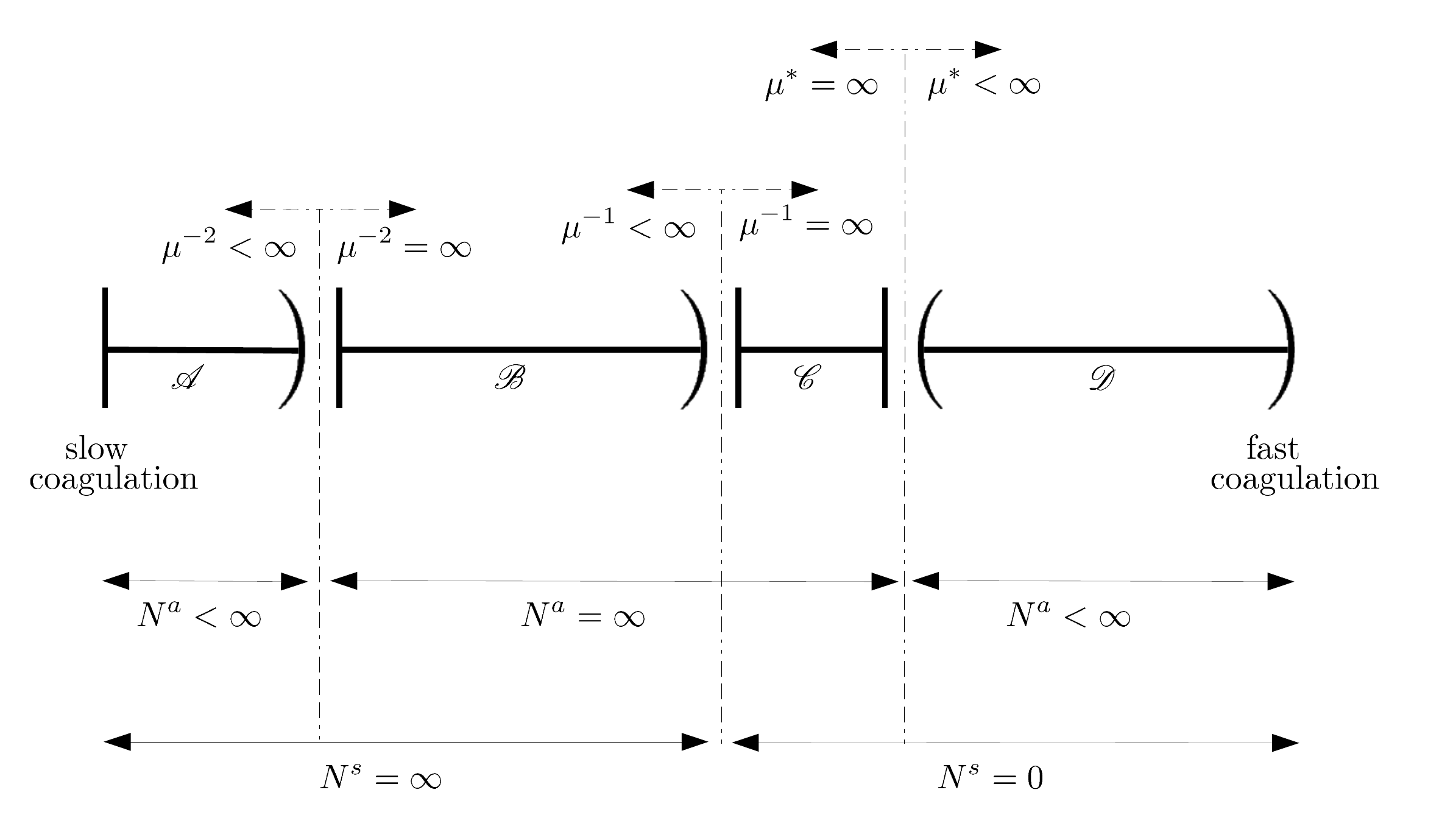} 
\caption{\label{lcoalfig} \small The qualitative changes in the behaviour of the $\Lambda$-coalescent are shown. The behaviour of the $\Lambda$-coalescent is determined solely by the behaviour of the measure $\Lambda$ at $0+$, that is, by the rate of small coagulation events. Moving from left to right, across the picture, the rate of these coagulation events increases. The resulting four behaviours are labelled $\mathscr{A}$ to $\mathscr{D}$.}
\end{center} 
\end{figure} 

\begin{example}[$\beta$-coalescents]\label{betaex} The $\Lambda$-coalescent where $\Lambda(dx)$ has the $\beta(2-\alpha,\alpha)$ distribution, for $\alpha\in(0,2)$, is known as the $\beta$-coalescent with parameter $\alpha$. The $\beta$-coalescents are one of the most tractable families of $\Lambda$-coalescents, see for example \cite{BBS2008}. 

The first example of a (non-Kingman) $\Lambda$-coalescent, which was introduced in \cite{BS1998}, corresponds to case of a $\beta$-coalescent with $\alpha=1$. In this case $\Lambda$ is the uniform probability measure on $[0,1]$. The case $\alpha=2$ is, by convention (or as the natural limiting case) Kingman's coalescent, where $\Lambda$ is a point mass at $0$. 

Label the different behaviours of the $\Lambda$-coalescent as $\mathscr{A}$ to $\mathscr{D}$, as in Figure \ref{lcoalfig}. Some easy estimates show that
\begin{itemize}
\item For $\alpha\in(0,1)$, behaviour $\mathscr{B}$ occurs
\item For $\alpha=1$, behaviour $\mathscr{C}$ occurs.
\item For $\alpha\in(1,2]$ behaviour $\mathscr{D}$ occurs.
\end{itemize}
Behaviour $\mathscr{A}$ does not occur amongst $\beta$-coalescents. However, it is easy to construct $\Lambda$-coalescents for which behaviour $\mathscr{A}$ does occur;  for example $\Lambda(dx)=x^2 dx$.
\end{example}

\subsection{Outline of the proof of Theorem \ref{uscthm}}
\label{proofoutline}

The case $\mu^{-2}<\infty$ is well known, and corresponds to the case where coagulation events (of $\Pi_t$) occur only at finite rate. Each coagulation event produces at most one new atomic block, and hence the total number of atomic blocks is always finite. We give an argument based on this intuition in Section \ref{finitesec}.

Let us now explain why the other case, $\mu^{-2}=\infty$, is not so obvious. In the case $\mu^{-2}=\infty$, it is immediate that (in some limiting sense) infinitely many coagulation events occur during all non-trivial time intervals. Since $\mu^{-1}<\infty$ (as an assumption of the theorem), at all times $\Pi_t$ contains a dust component with $\mathscr{D}(\Pi_t)>0$. Hence, each coagulation event takes a proportion of singletons out of the dust component, and merges them into a single new block. The catch is that each coagulation event also takes a proportion of the atomic blocks, and merges them too into the new block. Therefore, the argument from the case $\mu^{-2}<\infty$ does not work in reverse; it is not obvious that having infinitely many coagulation events is enough to guarantee infinitely many atomic blocks.

Note that, although the singletons decrease in number at each coagulation event, the atomic blocks can both increase and decrease in number. This is easily seen by considering $\Pi^n_t$. Therefore, it could potentially be the case that $N^a_t<\infty$ and $N^a_{t'}=\infty$ occurred at two different (random or deterministic) times $t,t'\in(0,\infty)$. With this observation in mind, it is not obvious that a result as clear cut as Theorem \ref{uscthm} holds. 

But, it is natural to suspect that for at least some choices of $\Lambda$, for at least some $t>0$, we might have $\mathscr{D}_t>0$ and $N^a_t=\infty$. Let us think, therefore, about how we might go about identifying such cases. 

One option is to consider the restrictions $\Pi^n_t$, and hope to show that as $n\to\infty$, the number of non-singleton blocks of $\Pi^n_t$ tends to infinity as $n\to\infty$. There is some difficulty involved in getting direct estimates on the behaviour of $\Pi^n_t$, especially in the general case where the behaviour of $\Lambda$ might vary wildly near $0$. We choose not to attempt these calculations, noting that there is already a wealth of literature offering more sophisticated ways of analysing the $\Lambda$-coalescent.

\begin{remark}
For example, \cite{P1999}, \cite{BL2003}, and \cite{BBC2005} offer three genuinely different ways in which to represent the $\Lambda$-coalescent. A further representation (for an important special case) is given in \cite{BBS2008}. We refer the reader to \cite{B2009} for further references. 
\end{remark}

In general, the $\Lambda$-coalescent is an infinite rate object and, in the spirit of the above paragraph, it is sensible to use more tractable finite rate approximations. As we saw in Definition \ref{lambdacoaldef}, one way to achieve this is by looking at only a finite subset of the particles (i.e. blocks) which are active in the system. A second option is to reduce the total rate of the system to something finite, and consider approximation by a sequence of coalescents, each acting on $\N$, but with only finite rate coagulation events.

To be precise, we could consider a sequence of $\Lambda$-coalescents, with
$$\int_0^1x^{-2}\Lambda^{(n)}(dx)<\infty,$$
such that $\Lambda^{(n)}\to\Lambda$, in some sense. This is borne out in the work of \cite{BL2003}, and the corresponding construction of the $\Lambda$-coalescent will be recapped in Section \ref{bridgessec}. Using this construction, we have excellent global control over the number of atomic blocks, and we will be able to get natural estimates on the number and corresponding asymptotic frequencies of the atomic blocks which appear in the finite rate approximations.

However, there is still one major problem to solve; the construction from \cite{BL2003} produces the $\Lambda$-coalescent as a limiting object, in the sense of finite dimensional distributions. Consequently, and in contrast to Definition \ref{lambdacoaldef}, the theory of \cite{BL2003} allows us to make estimates which are uniform in space (i.e. valid for arbitrarily many particles), but at the expense of uniformity in time. 

A minor technicality arises in the limit taking, since a partition with no atomic blocks, is close (in a suitable metric space sense) to a partition with a large (possibly infinite) number of very small atomic blocks. Therefore, some degree of care is needed to make sure the blocks which appear in our finite rate approximations really do appear in the limit. As one might expect, the issue is overcome by (loosely speaking) taking an arbitrary $\epsilon>0$ and working with blocks $b\in\Pi_t$ that have $\freq(b)\geq\epsilon>0$. 

Since the limit is taken in terms of finite dimensional distributions, we arrive at only the result $N^a_t=\infty$ for fixed, deterministic, times $t\in(0,\infty)$. It could, potentially, still be the case that $N^a_T<\infty$ at some random time $T$ or, even worse, on a random null (but potentially dense) subset of $(0,\infty)$.

We regain global control in time through the following observation. Fix deterministic $t'>0$ and consider the set $S$ of atomic blocks of $\Pi_{t'}$. Regard each of these blocks as a singletons in the initial state of a new coalescent, and define the evolution of our new coalescent via the coagulation induced from $(\Pi_{t'})_{t>t'}$. Let us name this new coalescent, which acts on $S$, as $\wt{\Lambda}_S$. It is not hard to see that $\wt{\Lambda}_S$ is a $\Lambda$-coalescent, with the same coagulation rates as $\Pi$. Since $\mu^{-1}<\infty$, $\Pi$ does not come down from infinity, and hence neither does $\wt{\Lambda}_S$. Hence, $N^a_{t}=\infty$ for all $t\geq t'$. Taking $t'$ to be arbitrarily small completes the proof of Theorem \ref{uscthm}.

\section{Flows of bridges}
\label{bridgessec}

In this section we recap some well known results of \cite{BL2003}, namely the construction of $\Lambda$-coalescents using flows of bridges. From now on we switch to using the measure
$$\nu(dx)=\frac{1}{x^2}\Lambda(dx),$$
since it will be intuitively clearer to see the $\Lambda$-coalescent in terms of $\nu$, rather than $\Lambda$.

\begin{remark} In terms of $\nu$, the conditions for Theorem \ref{uscthm} to apply are that $\nu(\{0,1\})=0$, and $\int_0^1 x\nu(dx)<\infty$. Note that the apparently extra assumption that $\nu(\{0\})=0$ is implied by $\int_0^1x^{-1}\Lambda(dx)<\infty$, via the convention that $1/0=\infty$. The dichotomy stated in Theorem \ref{uscthm} then rests on whether $\nu([0,1])$ is finite or infinite. 

The condition for the existence of a $\Lambda$-coalescent corresponding to $\nu$ is that $\int_0^1x^2\nu(dx)<\infty$. \end{remark}

In keeping with our new terminology, we refer to the $\Lambda$-coalescent $(\Pi_t)$ corresponding to $\nu$ as the $\nu$-coalescent, but we continue to use the term $\Lambda$-coalescent as a name for the general family of processes.

For the remainder of Section \ref{bridgessec}, let $\nu$ be a measure on $[0,1]$ such that $\int_0^1 x^2\nu(dx)<\infty$ and $\nu(\{0,1\})=0$. Let $(\Pi_t)_{t\geq 0}$ denote the $\nu$-coalescent. 

\begin{remark} In order to state the results of \cite{BL2003}, we will first need to develop some theory relating to exchangeable partitions. For this reason, it is convenient for us to simply use Sections 2.1, 2.3 and 4.4 of \cite{B2006} as our main reference for this section.
\end{remark}

\begin{defn}
Let $\mc{P}_{\mc{M}}$ be the space of sequences $s=(s_i)_{i=1}^\infty\sw [0,1]$ which are such that 
$$s_1\geq s_2\geq \ldots\geq 0\;\text{ and }\;\sum\limits_{i=1}^\infty s_i\leq 1$$
The set $\mc{P}_{\mc{M}}$ is a compact metric space, endowed with the metric $d(s,s')=\max\{|s_i-s'_i|\-i\in\N\}.$
\end{defn}

For an exchangeable partition $\pi=\{b_1,b_2\ldots\}$ of $\N$, we write $|\pi|=(\freq(b_1),\freq(b_2)\ldots)$. If $\pi=\{b_1,\ldots,b_n\}$ has only finitely many blocks, we set $|\pi|=(\freq(b_1),\ldots, \freq(b_n),0,0,\ldots)$. We write $|\pi|^{\downarrow}$ for the (random) sequence obtained by rearranged the elements of $|\pi|$ into decreasing order. The sequence $|\pi|^\downarrow$ is known as the sequence of \emph{asymptotic frequencies} of $\pi$. For any exchangeable partition $\pi\in\mc{P}_\N$, $|\pi|^\downarrow$ is an element of $\mc{P}_{\mc{M}}$. 

The following lemma will be of use to us, later on.

\begin{lemma}[Proposition 2.1, \citealt{B2006}]\label{skoruse}
Let $(s^n)$ be a sequence in $\mc{P}_{\mc{M}}$, where $s^n=(s^n_i)_{i=1}^\infty$ and $s\in\mc{P}_{\mc{M}}$. Then $d(s^n,s)\to 0$ if and only if for all $i\in\N$, $s^n_i\to s_i$.
\end{lemma}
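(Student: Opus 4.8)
The plan is to establish the two implications separately. The forward direction is immediate: if $d(s^n,s)\to 0$, then since $|s^n_i-s_i|\leq d(s^n,s)$ for every fixed $i$, pointwise convergence $s^n_i\to s_i$ follows at once. All the content is in the reverse direction, and I would base it on a single structural feature of the space $\mc{P}_{\mc{M}}$.

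The key observation is that every element of $\mc{P}_{\mc{M}}$ carries a \emph{uniform} tail bound. If $s=(s_i)\in\mc{P}_{\mc{M}}$, then because the $s_i$ are nonincreasing and nonnegative with $\sum_i s_i\leq 1$, we have $N s_N\leq \sum_{i=1}^N s_i\leq 1$, whence $s_i\leq 1/i$ for all $i$. The crucial point is that this estimate holds for \emph{every} point of $\mc{P}_{\mc{M}}$, so it applies simultaneously to the limit $s$ and to all of the $s^n$. (As a byproduct, it also shows the supremum in the definition of $d$ is in fact attained, justifying the use of $\max$.)

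Given this, I would argue by the standard "finite head plus uniformly small tail" splitting. Fix $\epsilon>0$ and choose $N$ with $2/N<\epsilon$. For all indices $i\geq N$ and every $n$, the tail bound gives $|s^n_i-s_i|\leq s^n_i+s_i\leq 2/N<\epsilon$, so the tail of the supremum is controlled uniformly in $n$ before the hypothesis is even invoked. For the finitely many remaining indices $i\in\{1,\ldots,N-1\}$, the assumed pointwise convergence $s^n_i\to s_i$ supplies an $M$ such that $|s^n_i-s_i|<\epsilon$ for all $n\geq M$ and all such $i$, this being a finite intersection of convergence conditions. Combining the two ranges yields $d(s^n,s)=\max_i|s^n_i-s_i|\leq\epsilon$ for $n\geq M$, and since $\epsilon$ was arbitrary, $d(s^n,s)\to 0$.

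The only step requiring any thought is the uniform tail estimate $s_i\leq 1/i$, which is exactly what lets us close the gap between pointwise and uniform convergence by reducing the tail to a bound independent of $n$. Once that is in hand there is no genuine obstacle: the rest is a routine $\epsilon$-argument.
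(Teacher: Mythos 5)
Your proof is correct. The paper does not prove this lemma itself --- it is quoted directly as Proposition 2.1 of \cite{B2006} --- and your argument (the uniform tail bound $s_i\leq 1/i$ valid for every element of $\mc{P}_{\mc{M}}$, followed by the finite-head/uniform-tail splitting) is exactly the standard proof given there, so there is nothing to add.
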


Let $L^2[0,1]$ denote the space of (equivalence classes of) square integrable functions $f:[0,1]\to[0,1]$, equipped with the usual $L^2$ metric, $$d_{L^2}(f,g)=\l(\int_0^1 |f(x)-g(x)|^2 dx\r)^{1/2}.$$

\begin{defn}
Let $s=(s_i)_{i=1}^\infty\in \mc{P}_{\mc{M}}$, and let $s_0=1-\sum_1^\infty s_i$. Let $(U_i)$ be an infinite sequence of independent uniform random variables on $[0,1]$. Any process which has the same distribution (in $L^2[0,1]$) as 
$$b_{s}(y)=s_0y+\sum_{i=1}^\infty s_i\1\{U_i\leq y\}$$
is known as an $s$-\emph{bridge} (or, when $s$ is not specified, a bridge). By definition, when we use more than one bridge at once, we use a different independent sequence of uniform random variables for each bridge.

If $s$ is random, then by definition we choose $(U_i)_1^\infty$ to be independent of $s$, and we also say $b_s$ is an $s$-bridge.
\end{defn}

\begin{defn}
If $s=(x,0,0,\ldots)$ then we say the bridge $b_s$ is a \emph{simple bridge}, and write
$$b_s(y)=b_x(y)=(1-x)y+x\1\{U\leq y\}.$$
Here $U$ is a uniform random variable on $[0,1]$, and $U$ is independent of $s$.
\end{defn}

\begin{defn}
A bridge $b_s$ where $s=(s_1,\ldots,s_n,0,0,\ldots)$ is said to be a \emph{finite bridge}. In the case where $s$ is random, $n$ may be random and dependent on $s$ (but not on the $(U_i)$).
\end{defn}

Note that a bridge is a right continuous, strictly increasing function. It is easily checked that the composition of finitely many bridges is a bridge, and that the composition of finitely many finite bridges is a finite bridge. But note that the composition of finitely many simply bridges is only a finite bridge. It will be convenient to picture our bridges as stochastic flows, rather than functions, as indicated in Figure \ref{bridge1}.

\begin{figure}[t]
\begin{center} 
\includegraphics[height=1.5in,width=2.5in]{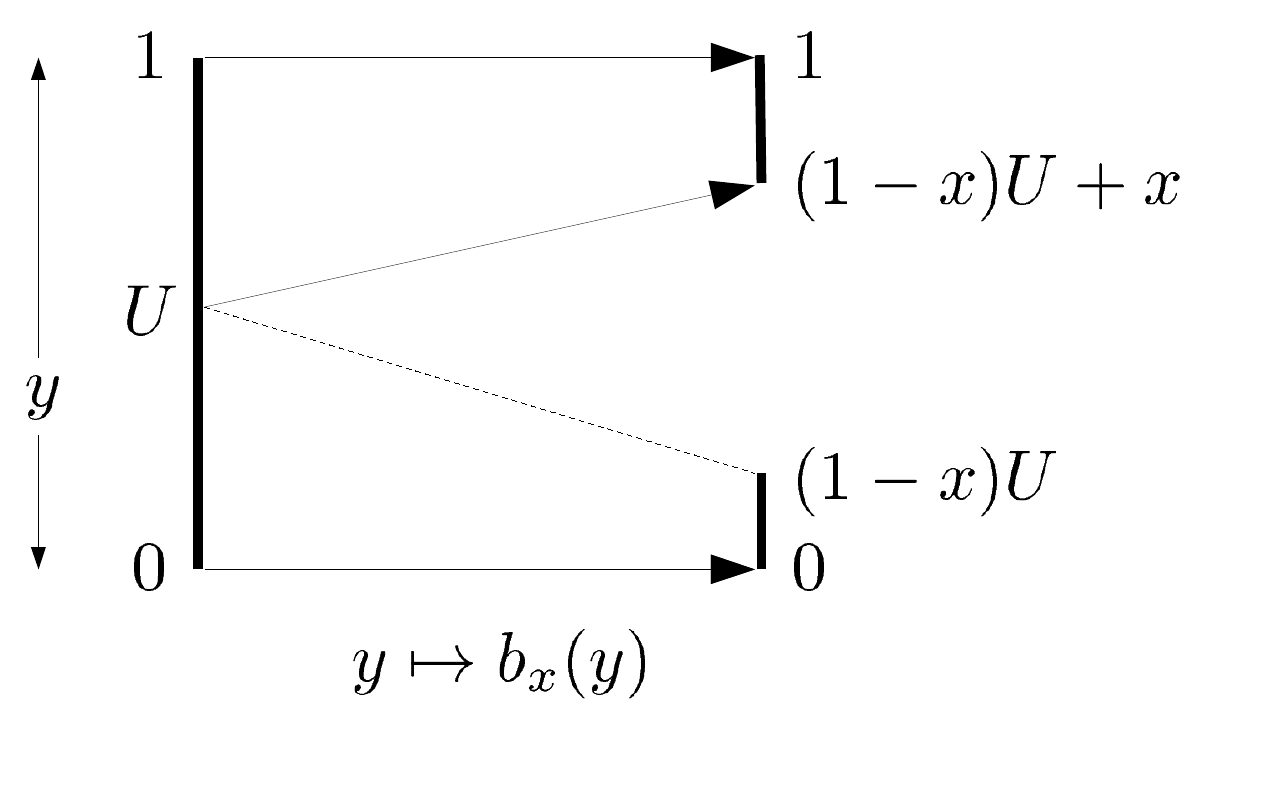} 
\caption{\label{bridge1} \small The simple bridge $b_x(y)=(1-x)y+x\1\{U\leq y\}$.}
\end{center} 
\end{figure} 

For an increasing function $f:[0,1]\to[0,1]$, let $f^{-1}$ denote the right continuous inverse of $f$, that is
$$f^{-1}(y)=\inf\{z\in[0,1]\-f(z)>y\}.$$
The following two Lemmas summarise the connection between exchangeable partitions and bridges, which is developed in Section 4.4 of \cite{B2006}.
 
\begin{lemma}\label{bridgepart}
Let $b_s$ be a bridge, and let $(V_i)_1^\infty$ be a sequence of uniform random variables on $[0,1]$, independent of each other and of $s$. Define a partition $\pi$ of $\N$ by
$$i\stackrel{\pi}{\sim}j\Longleftrightarrow b_s^{-1}(V_i)=b_s^{-1}(V_j).$$
Then $\pi$ is an exchangeable random partition. Further, $\mathscr{D}(\pi)=s_0=1-\sum_1^\infty s_i$, and $|\pi|^{\downarrow}=(s_i)_1^\infty$.
\end{lemma}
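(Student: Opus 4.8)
The plan is to read the partition $\pi$ directly off the jump structure of the bridge $b_s$, and then to obtain the asymptotic frequencies by a conditional strong law of large numbers. The whole argument rests on understanding the right-continuous inverse $b_s^{-1}$.

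First I would analyse $b_s^{-1}$. Since $\sum_i s_i\leq 1$, the series defining $b_s$ converges, and $b_s:[0,1]\to[0,1]$ is a strictly increasing, right-continuous function with $b_s(0)=0$, $b_s(1)=1$, a drift of slope $s_0$, and a jump of size $s_i$ at each point $U_i$. Almost surely the $U_i$ are distinct, so $b_s(U_i)-b_s(U_i^-)=s_i$ and the image of $b_s$ omits the half-open interval $J_i:=[b_s(U_i^-),b_s(U_i))$ of length $s_i$; the $J_i$ are pairwise disjoint. From $b_s^{-1}(y)=\inf\{z:b_s(z)>y\}$ one checks the clean characterisation $b_s^{-1}(y)=U_i\iff y\in J_i$, while $b_s^{-1}$ is injective on the complement $[0,1]\sc\bigcup_i J_i$. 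Consequently, for $i\neq j$ (and a.s., since the $V_k$ are a.s. distinct and avoid the countable set of interval endpoints), $b_s^{-1}(V_i)=b_s^{-1}(V_j)$ holds if and only if $V_i$ and $V_j$ lie in a common interval $J_k$. Thus the non-singleton blocks of $\pi$ are exactly the sets $B_k=\{i:V_i\in J_k\}$ with $|B_k|\geq 2$, and the singletons are the indices $i$ with $V_i\notin\bigcup_k J_k$.

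For exchangeability, let $\sigma$ be a permutation of $\N$. Writing out the relation defining $\sigma(\pi)$ gives $i\stackrel{\sigma(\pi)}{\sim}j\iff b_s^{-1}(V_{\sigma^{-1}(i)})=b_s^{-1}(V_{\sigma^{-1}(j)})$, so $\sigma(\pi)$ is built from the reindexed sequence $(V_{\sigma^{-1}(i)})_i$ by the same recipe. Since $(V_i)$ is i.i.d. and independent of $s$, the reindexed sequence has the same joint law as $(V_i)_i$ together with $s$, whence $\sigma(\pi)\stackrel{d}{=}\pi$. For the frequencies I would condition on $s$ and on $(U_i)$, which fixes the intervals $J_k$; conditionally the $V_i$ remain i.i.d. uniform. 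For each fixed $k$, $\P[V_i\in J_k\mid s,(U_i)]=|J_k|=s_k$, so the strong law gives $\freq(B_k)=\lim_n\tfrac1n|\{i\leq n:V_i\in J_k\}|=s_k$ almost surely; in particular $B_k$ is (a.s. infinite, hence) a genuine block whenever $s_k>0$, while $J_k=\varnothing$ when $s_k=0$. Likewise $\P[V_i\notin\bigcup_k J_k\mid s,(U_i)]=1-\sum_k s_k=s_0$, so by the strong law the proportion of indices whose $V_i$ avoids every $J_k$—which are precisely the singletons of $\pi$—tends to $s_0$. By Lemma \ref{exchparts} this proportion equals $1-\sum_{b\in\pi}\freq(b)=\mathscr{D}(\pi)$, giving $\mathscr{D}(\pi)=s_0$. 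Finally, the asymptotic frequencies of the non-singleton blocks are the $s_k$ with $s_k>0$, already in non-increasing order, so $|\pi|^\downarrow=(s_i)_1^\infty$.

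The main obstacle is the first step: pinning down exactly when $b_s^{-1}(V_i)=b_s^{-1}(V_j)$. Everything afterwards is a routine conditional application of the strong law, but the frequency computation and the identification of the dust rest entirely on the claim that two independent uniform samples are merged by $\pi$ precisely when they fall into the same jump interval of $b_s$, together with the almost-sure statements (distinct $U_i$, distinct $V_i$ avoiding the endpoints of the $J_k$) needed to make that equivalence exact.
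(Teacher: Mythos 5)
Your proof is correct: the identification of the level sets of $b_s^{-1}$ with the jump intervals $J_i$, the exchangeability via reindexing the i.i.d.\ sequence $(V_i)$, and the conditional strong law of large numbers for the frequencies together constitute the standard `paintbox' argument. The paper does not prove this lemma itself but quotes it from Section 4.4 of \cite{B2006}, where essentially this same argument is given, so there is nothing substantive to compare beyond noting that your write-up correctly supplies the details the paper omits --- in particular the almost-sure caveats (distinct $U_i$, the $V_i$ avoiding the endpoints of the $J_k$, and each $B_k$ with $s_k>0$ being infinite by Borel--Cantelli) that make the equivalence between merging under $\pi$ and landing in a common $J_k$ exact.
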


\begin{lemma}
Every exchangeable partition $\pi$ of $\N$ is equal in law to the exchangeable partition constructed by Lemma \ref{bridgepart} from $s=|\pi|^{\downarrow}$. 
\end{lemma}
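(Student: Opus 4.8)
The statement is Kingman's representation theorem recast in the language of bridges, so the plan is to verify that the construction of Lemma~\ref{bridgepart} reproduces every exchangeable partition. Since two $\mc{P}_\N$-valued random variables agree in law exactly when all their restrictions to $\{1,\dots,n\}$ agree, I would fix $n$ and a partition $\gamma$ of $\{1,\dots,n\}$ and aim to show $\P[\iota_n(\pi)=\gamma]=\P[\iota_n(\pi')=\gamma]$, where $\pi'$ denotes the partition built by Lemma~\ref{bridgepart} from $s=|\pi|^\downarrow$. Because both $\pi$ and $\pi'$ are exchangeable, each of these probabilities depends on $\gamma$ only through its block sizes (its EPPF), so the whole problem reduces to matching two such functionals.

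First I would make the law of $\pi'$ explicit. Conditionally on $s=(s_1,s_2,\dots)$ with $s_0=1-\sum_i s_i$, the bridge $b_s$ increases linearly with slope $s_0$ away from its jumps and has a jump of height $s_i$ at $U_i$; hence each sampling variable $V_j$ lands in the $i$-th jump interval with probability $s_i$ and in the linear part with probability $s_0$, and $j\sim k$ in $\pi'$ iff $V_j,V_k$ fall in the same jump. Thus, conditionally on $s$, the ``colours'' of $1,\dots,n$ are i.i.d.\ (atom $i$ with probability $s_i$, dust with probability $s_0$), blocks of size $\ge 2$ correspond to distinct atoms, and dust elements are singletons. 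This yields a closed expression $\varphi(\gamma;s)$ with $\P[\iota_n(\pi')=\gamma]=\E[\varphi(\gamma;s)]$ for $s=|\pi|^\downarrow$; note that Lemma~\ref{bridgepart} already guarantees $|\pi'|^\downarrow=s$, so the frequencies of $\pi$ and $\pi'$ do match.

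The heart of the argument is to show that an arbitrary exchangeable $\pi$ has this same EPPF. I would exploit exchangeability through a sampling device: for $N\ge n$ let $\{J_1,\dots,J_n\}$ be a uniformly chosen $n$-subset of $\{1,\dots,N\}$, independent of $\pi$, and let $\gamma_N$ be the partition of $\{1,\dots,n\}$ induced by $a\sim b \iff J_a\stackrel{\pi}{\sim}J_b$. Exchangeability of $\pi$ gives $\gamma_N\stackrel{d}{=}\iota_n(\pi)$ for every $N$, so $\P[\iota_n(\pi)=\gamma]=\E\big[\P[\gamma_N=\gamma\mid\pi]\big]$ is constant in $N$. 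Conditionally on $\pi$, the probability $\P[\gamma_N=\gamma\mid\pi]$ is an explicit ratio of subset counts, and as $N\to\infty$ the existence of asymptotic frequencies (Lemma~\ref{exchparts}) forces each block $B$ to occupy a proportion $\freq(B)$ of $\{1,\dots,N\}$ and the dust to occupy proportion $\mathscr{D}(\pi)$. A law-of-large-numbers computation then gives $\P[\gamma_N=\gamma\mid\pi]\to\varphi(\gamma;|\pi|^\downarrow)$ almost surely, and bounded convergence yields $\P[\iota_n(\pi)=\gamma]=\E[\varphi(\gamma;|\pi|^\downarrow)]=\P[\iota_n(\pi')=\gamma]$. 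Letting $n$ and $\gamma$ vary finishes the proof.

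I expect the main obstacle to be the final limit in the third paragraph: one must verify that the subset-counting probability really converges to the paintbox functional $\varphi$, which requires careful bookkeeping of the singleton blocks (distinguishing genuine dust of total mass $\mathscr{D}(\pi)$ from blocks of positive frequency that happen to be sampled only once) and of the constraint that distinct blocks of $\gamma$ of size $\ge2$ map to distinct atoms. This combinatorics, together with the uniform control needed to pass to the limit inside the expectation, is exactly the content of Kingman's theorem (Section~4.4 of \cite{B2006}); the bridge construction serves only to present the frequencies of $\pi'$ in the required form.
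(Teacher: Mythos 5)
Your proposal is correct in outline, but note that the paper does not actually prove this lemma: it is stated (together with Lemma~\ref{bridgepart}) as a quotation of Kingman's representation theorem from Section~4.4 of \cite{B2006}, so there is no in-paper argument to compare against. What you have written is a faithful sketch of the standard proof of that theorem: reduce equality in law to equality of the restricted laws (equivalently, of the exchangeable partition probability functions), compute the paintbox EPPF by observing that, conditionally on $s$, the sampling variables $V_j$ receive i.i.d.\ colours with atom $i$ having mass $s_i$ and dust mass $s_0$, and then recover the same functional for an arbitrary exchangeable $\pi$ by sampling $n$ indices uniformly without replacement from $\{1,\dots,N\}$ and letting $N\to\infty$. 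The two points you flag as delicate are indeed the only ones requiring care, and both are supplied by Lemma~\ref{exchparts}: the existence of the asymptotic frequencies and of the dust proportion gives the law of large numbers for the block counts, and the fact that zero-frequency blocks are almost surely singletons ensures that in the limit no block of $\gamma$ of size at least two can be charged to anything other than an atom, so the limiting functional really is the paintbox EPPF; the passage of the limit through the expectation is bounded convergence since the conditional probabilities are bounded by one. So your route is a correct, essentially self-contained replacement for the citation; what it buys is independence from \cite{B2006}, at the cost of rederiving a classical result the paper deliberately treats as known.
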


We extend the definition of $\mathscr{D}$ in the natural way, as follows. If $b$ is a bridge, $\mathscr{D}(b_s)$ denotes the size of the dust in the corresponding exchangeable partition. 

\cite{BL2003} showed that, with a careful choice of bridges, it is possible to use Lemma \ref{bridgepart} to construct the $\Lambda$-coalescent (as a limit, in terms of finite dimensional distributions). To this end, let $M$ be a Poisson point process with points $(t,x)\in(0,\infty)\times [0,1]$ and intensity measure
$$dt\otimes \nu(dx).$$
Here $dt$ denotes Lebesgue measure. We now make several `without loss of generality' assumptions, since it will be convenient to have several almost sure properties of $M$ as `sure' properties of $M$.

Since $\nu(\{0,1\})=0$, almost surely $M$ is contained in $(0,\infty)\times (0,1)$, so without loss of generality we will assume that $M\sw (0,\infty)\times (0,1)$. It follows from $\sigma$-finiteness of $dt\otimes \nu(dx)$ that, almost surely, for all $t\in (0,\infty)$, at most one point of $M$ has $t$ as its first coordinate. So, without loss of generality, we will assume this is the case for all realizations of $M$. Similarly, almost surely, for each $x\in (0,1)$, at most one point of $M$ has $x$ as its second coordinate; without loss of generality we assume this, too, is the case for all realizations of $M$. For all $n\in\N$, since $\nu((1/n,1])$ is a finite measure, $M\cap (0,\infty)\times (1/n,1)$ is almost surely a finite set. So, finally, without loss of generality we assume that both $M\cap (0,\infty)\times (1/n,1)$ is finite, for all realizations of $M$ and all $n,t,t'$. 

If $\nu([0,1])=\infty$, it follows that for all $0\leq t<t'$, $M\cap(t,t']\times(0,1)$ is almost surely a countably infinite set. If this is the case, without loss of generality, we assume also that $M\cap(t,t']\times(0,1)$ is countably infinite for all $0\leq t<t'$. If $\nu([0,1])<\infty$ then for all $0\leq t<t'$, $M\cap(t,t']\times(0,1)$ is almost surely a finite set, and, without loss of generality, if $\nu([0,1])<\infty$ we assume this is so. We adopt the convention that a Poisson random variable $X\sim\text{Poisson}(\infty)$ simply means $X=\infty$.

Fix $0\leq t<t'<\infty$. Set $N_\infty=|M|$, and note that by the above, $$N_\infty\sim\text{Poisson}\big((t'-t)\nu([0,1])\big).$$ Enumerate $M\cap(t,t']\times (0,1)^2$ such that
\begin{align*}
&M\cap(t,t']\times (0,1)=\{(t_i,x_i)\-i=1,\ldots, N_\infty\}
\end{align*}
where, for all $i$, $x_i<x_{i+1}$. For each $m\in\N$ and $0\leq t<t'$, define
\begin{equation*}\label{Nm}
N_m=\#\big(M\cap(0,1)\times (1/m,1)\big)
\end{equation*}
and note that then $\{(t_i,x_i)\-i=1,\ldots,N_m\}=M\cap(0,1)\times (1/m,1)$, so as 
\begin{equation}\label{Nmdist}
N_m\sim\text{Poisson}\big((t'-t)\nu((1/n,1])\big).
\end{equation}
It is easy to see that $N_m\leq N_{m+1}$, and that $N_m\to N_\infty$ as $m\to\infty$.

For each $n\in\N\cap\{1,\ldots, N_\infty\}$, define the bijection $\sigma:\{1,\ldots, n\}\to\{1,\ldots, n\}$ by the properties
\begin{align*}
&\{(t_i,x_i)\-i=1,\ldots,n\}=\{(t_{\sigma(i)},x_{\sigma(i)})\-i\in\N\}\\
&\text{for all }i,\;t_{\sigma}(i)< t_{\sigma(i+1)}
\end{align*}
To avoid excessive subscripts, we do not normally write the dependence of $\sigma=\sigma_n$ on $n$. Set
\begin{equation}\label{Bnt}
B^{(n)}_{t,t'}=b_{x_{\sigma(1)}}\circ\ldots \circ b_{x_{\sigma(n)}}.
\end{equation}
If $N_\infty<\infty$ and $n\in\N$, $n>N_\infty$, then we define $B^{(n)}_{t,t'}=B^{(N_\infty)}_{t,t'}$. Note that we have defined $B^{(n)}_{t,t'}$ for all $n\in\N$, but not for $n=\infty$. The limiting case $n=\infty$ will be defined as part of Theorem \ref{lconstrbl}. For all $n\in\N$, $B^{(n)}_{t,t'}$ is a finite bridge.

\begin{remark}
Our notation for the order of function composition is that $(f \circ g)(x)=g(f(x))$. This is the same as in \cite{B2006}.
\end{remark}

\begin{remark}
To summarise the above, we impose an order on $M=\{(t_i,x_i)\-i\in\N\}$ by ranking the second coordinates of elements of $M$ in decreasing order. Then, $\{(t_{\sigma(i)},x_{\sigma(i)})\-i\in\N\}$ is the first $n$ elements of $M$, reordered by time coordinate. 

The random function $B^{(n)}_{t,t'}$ is the composition of the simple bridges corresponding to the first $n$ points of $M$, composed according to the order of their time coordinates. Similarly, the random function $B^{(N_m)}_{t,t'}$ is the composition, after appropriate reordering, of the bridges corresponding to $M\cap (t,t')\times (1/n,1)$.

\end{remark}

We denote the right continuous inverse of $B^{(n)}_{t,t'}$ by $B^{(n),-1}_{t,t'}$. 

\begin{normaltheorem}\label{lconstrbl} The following holds.
\begin{enumerate}
\item For each $0\leq t<t'$, the sequence $B^{(N_m)}_{t',t}$ converges in law. 
\suspend{enumerate}
Denote the limit by $B^{(\infty)}_{t,t'}$, and its right continuous inverse by $B^{(\infty),-1}_{t,t'}$.

Now fix $t>0$. Let $(V_i)_{i=1}^\infty$ be a sequence of uniform random variables on $[0,1]$, which are independent of $\{B_{0,t}^{(n)}\-n\in\N\cup\{\infty\}\}$, and of each other. For each $n\in\N\cup\{\infty\}$ and $t>0$ define a random partition $\pi_t^{(n)}$ of $\N$ by
$$i\stackrel{\pi^{(n)}_t}{\sim} j\Longleftrightarrow B^{(n),-1}_{0,t}(V_i)=B^{-1}_{0,t}(V_j).$$
\resume{enumerate}
\item Then, $\pi^{(\infty)}_t$ and $\Pi^{(\infty)}_t$ have the same distribution.
\item As $m\to\infty$, $\pi^{(N_m)}_t$ converges to $\pi^{(\infty)}_t$ in distribution.
\item As $m\to\infty$, $|\pi^{(N_m)}_t|^\downarrow$ converges to $|\pi^{(\infty)}_t|^\downarrow$ in distribution.
\end{enumerate}
\end{normaltheorem}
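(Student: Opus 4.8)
Each of the four statements is part of the flow-of-bridges construction of \cite{BL2003}; the plan is to recover them from the martingale structure of bridges together with Sections~2.1, 2.3 and 4.4 of \cite{B2006}, supplying the convergence-transfer arguments for (3) and (4).

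For (1), the key point is that every simple bridge preserves the mean, $\E[b_x(y)]=y$, so the same holds for any composition, $\E[B^{(N_m)}_{t,t'}(y)]=y$. I would first record the flow identity $B_{s,u}=B_{s,t}\circ B_{t,u}$ for $s<t<u$, which holds because composing the bridges from the points of $M$ in $(s,t]$ and then in $(t,u]$, in time order, reproduces those from $(s,u]$. Lowering the threshold from $1/m$ to $1/m'$ inserts extra simple bridges $b_x$ with $x\in(1/m',1/m]$ into the composition, and since a single simple bridge has $\E[(b_x(y)-y)^2]=x^2y(1-y)$, the existence assumption $\int_0^1 x^2\nu(dx)<\infty$ bounds the total $L^2$ fluctuation from the small jumps. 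A variance estimate then shows that $(B^{(N_m)}_{t,t'}(y))_m$ is Cauchy in $L^2$ for each $y$, which upgrades to convergence in law in $L^2[0,1]$ and defines $B^{(\infty)}_{t,t'}$; this is the content of the flow-of-bridges theorem of \cite{BL2003}.

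Part (2) is the identification of the flow with the coalescent. I would check that, for each finite $m$, the partition-valued process read from $B^{(N_m)}_{0,\cdot}$ via Lemma~\ref{bridgepart} is a $\Lambda$-coalescent whose coagulations are driven by $\nu$ restricted to $(1/m,1)$: under the inverse map a jump of size $x$ sends an independent proportion $x$ of the current blocks into one new block, which is exactly the binomial merging mechanism of Definition~\ref{lambdacoaldef}. Letting $m\to\infty$ the rates converge to those of $\nu$, and $\pi^{(\infty)}_t$ acquires the law of $\Pi^{(\infty)}_t$.

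For (3) and (4) I would transfer convergence of bridges to the induced partitions. With the $(V_i)$ fixed and independent of the bridges, two indices $i,j$ lie in the same block of $\pi^{(n)}_t$ precisely when $V_i$ and $V_j$ fall in the image of the same jump of $B^{(n)}_{0,t}$, so each finite restriction $\iota_k(\pi^{(n)}_t)$ is determined by the jump sizes and locations of the bridge; since the $(V_i)$ almost surely avoid the countable set of jump endpoints, the convergence from (1) yields convergence in distribution of $\iota_k(\pi^{(N_m)}_t)$ to $\iota_k(\pi^{(\infty)}_t)$ for each $k$, which gives (3). For (4), Lemma~\ref{bridgepart} identifies $|\pi^{(n)}_t|^\downarrow$ with the ranked jump sizes of $B^{(n)}_{0,t}$, convergence of which is coordinatewise and hence, by Lemma~\ref{skoruse}, is convergence in $\mc{P}_{\mc{M}}$. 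I expect the transfer step to be the main obstacle: the map sending a bridge to its partition via a right-continuous inverse is not continuous in general, so some care is needed to guarantee that the small atomic blocks present in the approximations persist in the limit rather than dissolving into dust. The point is that the coordinatewise criterion of Lemma~\ref{skoruse}, combined with the almost-sure avoidance of jump endpoints by the $(V_i)$, is robust enough to push the convergence through.
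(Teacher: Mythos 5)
The paper does not prove this theorem from scratch: its proof is a citation to Theorem 4.3 of \cite{B2006}, together with the observation that the paper's $B^{(N_m)}_{t,t'}$ is Bertoin's $B^{(m)}_{t,t'}$ (Bertoin composes all bridges from $M\cap(t,t']\times(1/m,1)$ at once), and with statement \textit{4} obtained from \textit{3} by Proposition 2.9 of \cite{B2006}. Your proposal instead sketches the proof that lies behind that citation. The outline is the right one — the mean-preservation $\E[b_x(y)]=y$, the second-moment bound $\E[(b_x(y)-y)^2]=x^2y(1-y)$ combined with $\int_0^1x^2\nu(dx)<\infty$ to get an $L^2$ martingale-type convergence for (1), the rate computation identifying the finite-rate flows with $\nu$-coalescents restricted to $(1/m,1)$ for (2), and the transfer from bridges to partitions for (3)–(4). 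As a roadmap this is faithful to Bertoin--Le Gall; note only that the composition is nonlinear, so ``summing variances of inserted bridges'' requires the conditional-orthogonality structure (Bertoin's Lemma 4.4), not a naive Cauchy estimate.

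The genuine gap is in your handling of (3) and (4), which is exactly the part the paper outsources. For (4) you assert that the ranked jump sizes of $B^{(N_m)}_{0,t}$ converge coordinatewise to those of the limit. Convergence of the bridges in $L^2[0,1]$ (even almost surely) does not give this: a jump of the limit can in principle be approximated by several smaller jumps, or a sequence of jumps can shrink to nothing, so the map from a bridge to its ranked jump sequence — equivalently, from an exchangeable partition to $|\pi|^\downarrow$ — is not continuous. This is precisely the ``small atomic blocks dissolving into dust'' issue you name, and naming it is not resolving it; the paper resolves it by invoking Proposition 2.9 of \cite{B2006}, which uses exchangeability (via a law-of-large-numbers argument on the restrictions $\iota_k$) to upgrade convergence in distribution of partitions to convergence in distribution of their ranked frequencies. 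Similarly for (3): the statement ``$i\sim j$ iff $V_i,V_j$ fall in the image of the same jump'' means you need the holes of $B^{(N_m)}_{0,t}$, not just the function values, to converge in the relevant sense, and the avoidance of countably many jump endpoints by the $V_i$ does not by itself deliver that. If you want a self-contained proof you must either reproduce Bertoin's argument for these two steps or, as the paper does, cite Theorem 4.3 and Proposition 2.9 of \cite{B2006} explicitly.
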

\begin{proof} Essentially, this result is Theorem 4.3 of \cite{B2006}. Note, however, that \citeauthor{B2006} defines the symbol $B^{(n)}$ slightly differently to us (see \eqref{Bnt} for our definition). Rather than considering each point of $M$ as a separate entity, Bertoin's $B^{(n)}_{t,t'}$ is a composition of the bridges corresponding to all points of the finite set $M\cap (t,t']\cap (1/n,1)$. 

The connection is that, for $m\in\N$,  \citeauthor{B2006}'s $B^{(m)}_{t,t'}$ is our $B^{(N_m)}_{t,t'}$. Having realized this, statements \textit{1} and \textit{2} are simply restatements of facts contained within Theorem 4.3 of \cite{B2006}. Statement \textit{3} is proved during the course of the proof of that theorem, and statement \textit{4} follows immediately from \textit{3}, by applying Proposition 2.9 of \cite{B2006}.
\end{proof}


\begin{remark}


Theorem 4.3 of \cite{B2006} also states that $B_{t,t'}$ is a stochastic flow (in some sense), but Theorem \ref{lconstrbl} contains all the information needed for our purposes. \end{remark}


\section{Proof of Theorem \ref{uscthm}}
\label{proofsec}

For the remainder of the article, suppose that $\int_0^1 x\nu(dx)<\infty$ and that $\nu(\{0,1\})=0$. We consider the case $\nu([0,1])<\infty$ in Section \ref{finitesec}, and the remaining sections are devoted to the case $\nu([0,1])=\infty$.

A central concept of the proof is the idea of a hole in a finite bridge, defined as follows.

\begin{defn}\label{holesdef}
A set $H\sw[0,1]$ is said to be a \emph{hole} of the finite bridge $b_s$ if $H$ is a maximally connected component of $[0,1]\sc b_s([0,1])$.
\end{defn}

Since $b_s$ is right continuous and has only finitely many discontinuities, $b_s$ has finitely many holes, each of which is a half open interval $H=[h_1,h_2)$. The \emph{size} of the hole $H=[h_1,h_2)$ is given by 
$$\mathscr{S}(H)=h_2-h_1.$$
See Figure \ref{bridge3}. Note that $\mathscr{S}(H)>0$ for all holes $H$ of $b_s$. We say $h_2$ is the \emph{boundary point} of the hole $H=[h_1,h_2)$ and write $h_2=\p H$. 

\begin{figure}[t]
\begin{center} 
\includegraphics[height=2in,width=2.8in]{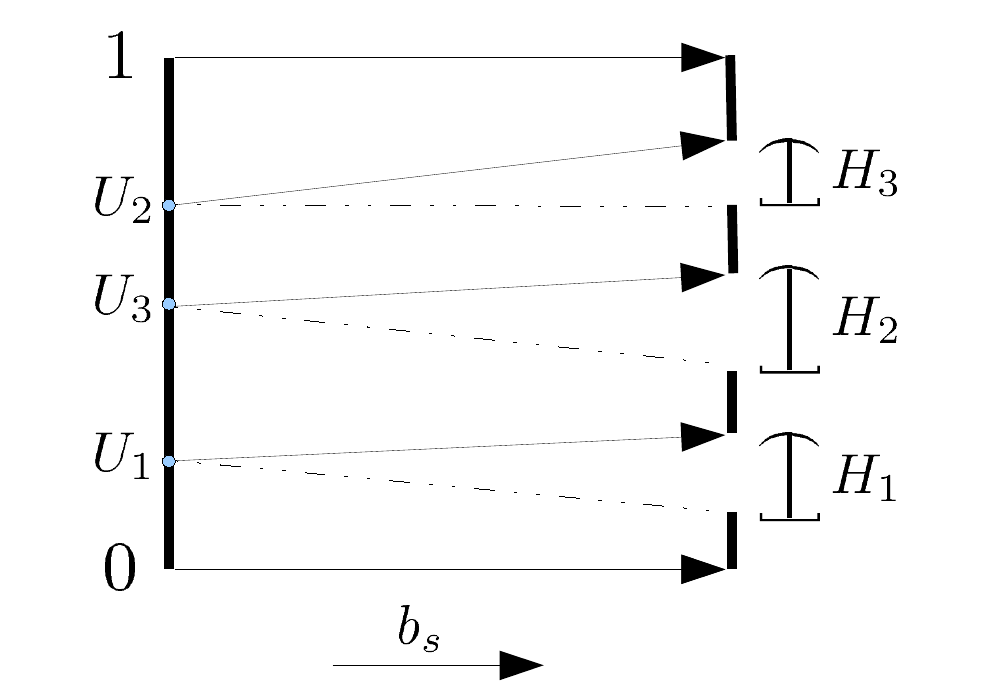} 
\caption{\label{bridge3} \small The holes $H_1,H_2,H_3$ of the bridge $b_s(y)=(1-s_0)y+\sum_1^3s_i\1\{U_i\leq y\}.$}
\end{center} 
\end{figure}

Holes are a natural way of thinking about the connection between bridges and exchangeable partitions, as the following Lemma shows. 

\begin{lemma}\label{holesfreqs} Let $b_s$ be a bridge and let $\pi$ be the exchangeable partition of $\N$ constructed by Lemma \ref{bridgepart} from $b_s$. Then there is a bijective correspondence between the holes of $b_s$ and the atomic blocks of $\pi$, such that
$$\mathscr{S}(H)=\freq(b)$$
(where $b$ is an atomic block of $\pi$ and $H$ is a hole of $b_s$). 
\end{lemma}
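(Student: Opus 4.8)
The plan is to read off the entire block structure of $\pi$ from the right continuous inverse $b_s^{-1}$, whose level sets (fibres) carry all the relevant information. First I would record the shape of $b_s^{-1}$. The bridge $b_s(y)=s_0y+\sum_i s_i\1\{U_i\le y\}$ is non-decreasing and right continuous, with a jump of size $s_i$ at each $U_i$. A short computation with the definition $b_s^{-1}(y)=\inf\{z:b_s(z)>y\}$ shows that for $y$ in the hole $H_i=[b_s(U_i^-),b_s(U_i))$ one has $b_s^{-1}(y)=U_i$: indeed $b_s(z)\le b_s(U_i^-)\le y$ for every $z<U_i$, while $b_s(U_i)=b_s(U_i^-)+s_i>y$. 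Thus $b_s^{-1}$ is constant, equal to $U_i$, on the hole $H_i$, and up to at most the single boundary point $\p H_i$ the fibre $\{y:b_s^{-1}(y)=U_i\}$ coincides with $H_i$ and has Lebesgue measure $\mathscr{S}(H_i)=s_i$. Every fibre other than these is a single point, since off the union of the holes the map $b_s^{-1}$ is injective.

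Next I would compute asymptotic frequencies by conditioning on $b_s$ (equivalently, on $s$ and the $(U_i)$) and applying the strong law of large numbers to the independent uniforms $(V_j)$. By construction the block of $\pi$ containing $i$ is $\{j:b_s^{-1}(V_j)=b_s^{-1}(V_i)\}$, so its asymptotic frequency equals, almost surely, the probability that an independent uniform lands in the fibre $\{y:b_s^{-1}(y)=b_s^{-1}(V_i)\}$, i.e. the Lebesgue measure of that fibre. Hence if $V_i$ falls in a hole $H_k$ the block has frequency $\mathscr{S}(H_k)=s_k>0$ and is atomic, whereas if $V_i$ avoids every hole the fibre is a single point, the frequency is $0$, and the block is a singleton, consistent with Lemma \ref{exchparts}.

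Finally I would assemble the bijection. To each hole $H_k$ I associate the block $B_k=\{j:V_j\in H_k\}=\{j:b_s^{-1}(V_j)=U_k\}$. Since $H_k$ has positive length and the $(V_j)$ are infinitely many independent uniforms, $B_k$ is almost surely non-empty, so it is a genuine block of $\pi$, and by the previous step it is atomic with $\freq(B_k)=\mathscr{S}(H_k)$. Distinct holes $H_k\ne H_{k'}$ give $U_k\ne U_{k'}$ and hence disjoint blocks, so $H_k\mapsto B_k$ is injective; and every atomic block arises in this way, because any of its representatives $V_i$ must, by the frequency computation, lie in some hole. This exhibits the required bijective, frequency preserving correspondence.

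The technical heart of the argument is the explicit identification of the positive measure fibres of $b_s^{-1}$ with the holes of $b_s$; once this is in place the frequencies follow immediately from the law of large numbers. The only points needing care are measure-zero exceptional events — a uniform $V_j$ landing exactly on some $\p H_i$, or two of the $V_j$ coinciding — all of which have probability zero and do not affect the frequencies; and the observation that, although the exact fibre of $U_i$ may be the closed interval $[b_s(U_i^-),b_s(U_i)]$ rather than the half-open hole $H_i$, the two differ by at most the single point $\p H_i$ and therefore share the Lebesgue measure $\mathscr{S}(H_i)$.
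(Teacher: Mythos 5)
Your proposal is correct and follows essentially the same route as the paper's proof: each hole is identified with a level set (fibre) of $b_s^{-1}$ having the same Lebesgue measure, and the asymptotic frequencies are then read off via the law of large numbers underlying Kingman's paintbox construction. You simply supply more detail than the paper does --- the explicit computation of the fibres of $b_s^{-1}$, the measure-zero caveats, and the verification that the correspondence is a genuine bijection --- where the paper delegates these points to the paintbox construction of Lemma \ref{bridgepart}.
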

\begin{proof}
Recall the construction of Lemma \ref{bridgepart}, which is usually known as Kingman's `paintbox' construction (described in Section 4.4 of \citealt{B2006}). Note that each hole $H$ of size $x$ corresponds to a level set of $b_s^{-1}$, namely 
$$\{y\in[0,1]\-b_s^{-1}(y)=b_s^{-1}(\p H)\}.$$
The probability that each $V_i$ falls into this level set is precisely $x$, which implies that the corresponding block in $\pi$ has asymptotic frequency $x$. 
\end{proof}

The real advantages of thinking in terms of holes is that they provide a way of tracking the atomic blocks through the composition 
$$B^{(n)}_{0,t}=b_{x_{\sigma(1)}}\circ\ldots b_{x_{\sigma(n)}}.$$
We will show precisely how this is achieved in the coming sections. 

\subsection{The case $\nu([0,1])<\infty$}
\label{finitesec}

The case $\nu([0,1])<\infty$ is straightforward and is already well understood, see Example 19 of \cite{P1999}. We will give a heuristic proof using flows of bridges (which is easily made rigorous), as a way of familiarising the reader with some of the methods which we use in Section \ref{dettimesec}.

Since $\nu([0,1])<\infty$, $N_\infty\sim\text{Poisson}(t\nu([0,1]))$, and in particular $\P\l[N_\infty<\infty\r]=1$. Hence, there exists a (random) $M\in\N$ such that, for all $m\geq M$, $$B^{(N_m)}_{0,t}=B^{(N_M)}_{0,t}.$$ By Theorem \ref{lconstrbl}, $\Pi_t$ has the same distribution as the partition associated to $B^{(N_M)}_{0,t}$. Now, since $N_M\leq N_\infty<\infty$, $B^{(N_M)}_{0,t}$ is a finite bridge. Setting $\sigma=\sigma_{N_M}$, we have
$$B^{(N_M)}_{0,t}=b_{x_{\sigma(1)}}\circ\ldots b_{x_{\sigma(N_M)}}.$$
An illustration of the composition of finite bridges is given as Figure \ref{bridge2}.

\begin{figure}[t]
\begin{center} 
\includegraphics[height=3.5in,width=6in]{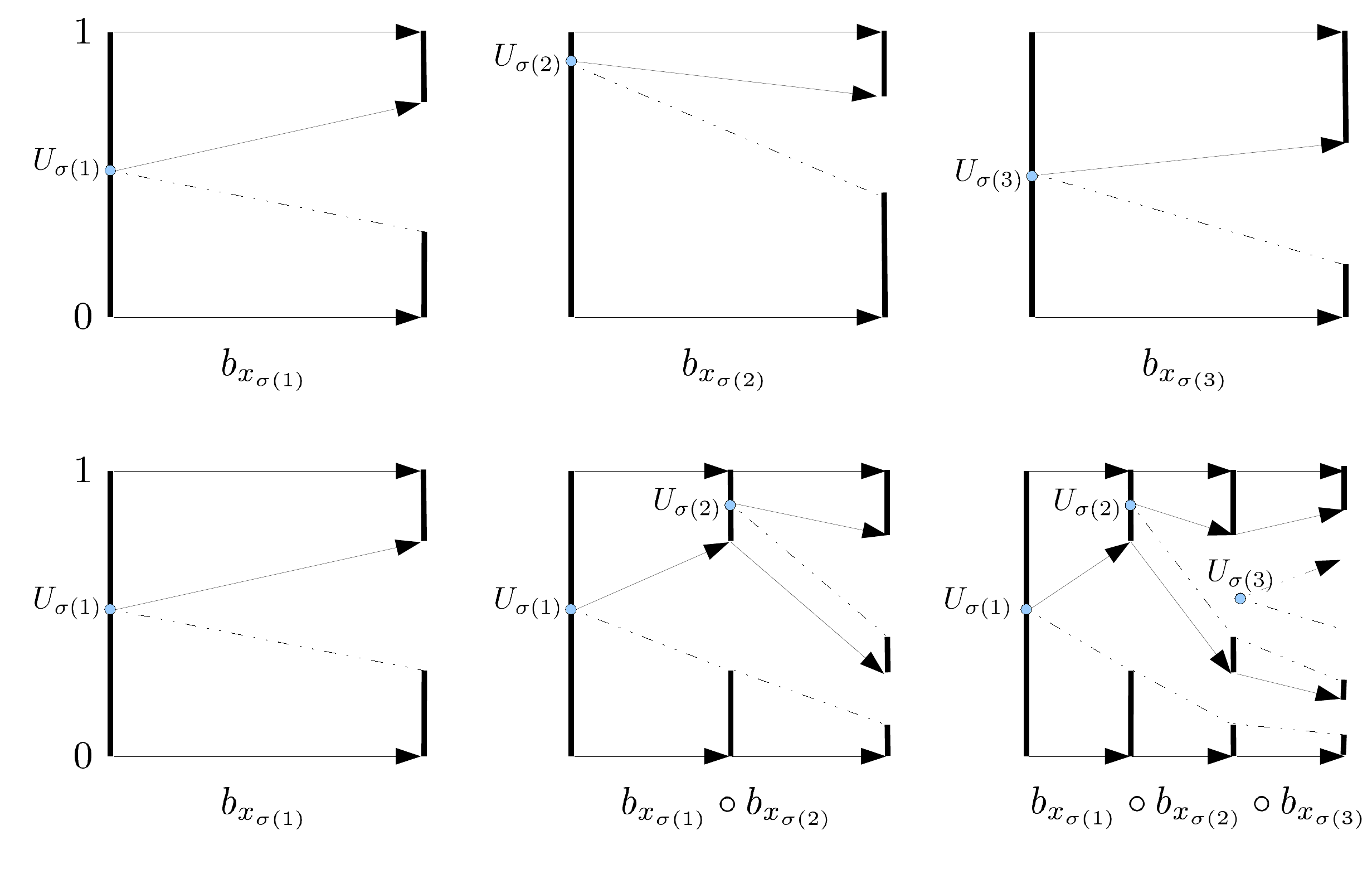} 
\caption{\label{bridge2} \small The composition of three simple bridges. Note that composition with $b_{x_{\sigma(2)}}$ adds a new hole, but composition with $b_{x_{\sigma(3)}}$ does not. Recall that $(f\circ g)(x)=g(f(x))$.}
\end{center} 
\end{figure}

From Figure \ref{bridge2}, we see that composing a new bridge (on the right) with a finite bridge will either (1) keep the number of holes constant, or (2) increase the number of holes by 1. Hence, $B^{(N_M)}_{0,t}$ has at most $N_M<\infty$ holes. By Lemma \ref{holesfreqs}, $\pi^{(N_m)}_t$ has only finite many atomic blocks, and (heuristically at least; there are technicalities to take care of at this point) the limit has the same atomic blocks as $\pi^{(N_m)}_t$, which finishes the argument.

\subsection{At a single deterministic time}
\label{dettimesec}

We now focus on the case $\nu([0,1])=\infty$, which is the new part of the content of Theorem \ref{uscthm}. Recall that $N_\infty=|M|=\infty$, in this case. In this section we prove the required result for a single deterministic time $t\in(0,\infty)$. That is, we prove

\begin{normaltheorem}\label{dettimethm}
Let $t>0$. Then $\P\l[N^a_t=\infty\r]=1$.
\end{normaltheorem}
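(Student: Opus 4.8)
The plan is to prove the equivalent statement that, for every $K\in\N$, the $K$-th largest asymptotic frequency of $\Pi_t$ is almost surely strictly positive; since $N^a_t$ is the number of strictly positive frequencies, intersecting over $K$ then yields $\P[N^a_t=\infty]=1$. By Theorem \ref{lconstrbl}, $\Pi_t$ has the law of $\pi^{(\infty)}_t$ and $|\pi^{(N_m)}_t|^\downarrow\to|\pi^{(\infty)}_t|^\downarrow$ in distribution in $\mc{P}_{\mc{M}}$; by Lemma \ref{skoruse} the $K$-th coordinate map is continuous, so writing $s^{(n)}_K$ for the $K$-th entry of $|\pi^{(n)}_t|^\downarrow$ we have $s^{(N_m)}_K\Rightarrow s^{(\infty)}_K$ as $m\to\infty$. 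By Lemma \ref{holesfreqs}, $s^{(N_m)}_K$ is exactly the $K$-th largest \emph{hole size} of the finite bridge $B^{(N_m)}_{0,t}$. The strategy is to obtain a lower bound on the number of large holes of $B^{(N_m)}_{0,t}$ that is \emph{uniform in $m$}, and then push it through the weak limit.

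First I would analyse how holes behave under the finite composition $B^{(N_m)}_{0,t}=b_{x_{\sigma(1)}}\circ\cdots\circ b_{x_{\sigma(N_m)}}$, building it up one simple bridge at a time in increasing order of time coordinate, so that each new bridge $b_{x_{\sigma(k)}}$ is applied on the outside. Since $b_{x_{\sigma(k)}}$ has constant slope $1-x_{\sigma(k)}$ away from its single jump of height $x_{\sigma(k)}$, post-composition rescales the range of the previous bridge by this slope and inserts one new gap: every pre-existing hole is multiplied by the factor $1-x_{\sigma(k)}$ (and possibly enlarged, if the new jump falls inside it), while a genuinely new hole, of size $x_{\sigma(k)}$, is created precisely when the defining uniform of $b_{x_{\sigma(k)}}$ lands in the current range rather than in an existing hole. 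Two consequences are central: the number of holes is non-decreasing along the composition, and a hole born at the event $(t_{\sigma(k)},x_{\sigma(k)})$ has, at the final time, size at least $x_{\sigma(k)}\prod_{k'>k}(1-x_{\sigma(k')})\ge x_{\sigma(k)}\,\mathscr{D}(B^{(N_m)}_{0,t})$, since the cumulative rescaling factor equals the dust $\mathscr{D}(B^{(N_m)}_{0,t})=\prod_{k'}(1-x_{\sigma(k')})$.

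Next I would produce the uniform lower bound. Fix $K$ and $\eta>0$. Because $\int_0^1 x\,\nu(dx)<\infty$, the dust $\mathscr{D}_t=\prod_{t_i\le t}(1-x_i)$ is almost surely positive, so there is $d_0>0$ with $\P[\mathscr{D}_t\ge d_0]\ge 1-\eta$; note that $\mathscr{D}(B^{(N_m)}_{0,t})\ge\mathscr{D}_t$ for every $m$, as the approximation only omits events. Choose $L$ so large that a $\mathrm{Binomial}(L,d_0)$ variable is $\ge K$ with probability $\ge 1-\eta$, and then, using $\nu([0,1])=\infty$ so that $\nu((\epsilon_0,1))\uparrow\infty$ as $\epsilon_0\downarrow 0$, choose $\epsilon_0>0$ with $\P[\#\{i:t_i\le t,\ x_i>\epsilon_0\}\ge L]\ge 1-\eta$. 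Now condition on the Poisson configuration, on the good event that the dust is at least $d_0$ and there are at least $L$ such ``macroscopic'' events; the defining uniforms are independent of the configuration, and at each macroscopic event the conditional probability (given the past) that it creates a new hole equals the current range length, which is at least $d_0$. A standard stochastic-domination argument then shows that the number of macroscopic events creating a hole dominates a $\mathrm{Binomial}(L,d_0)$ variable, so with probability at least $1-3\eta$ there are at least $K$ distinct holes, each born at an event of size $>\epsilon_0$ and hence of final size at least $\epsilon_0 d_0=:a$. Since this holds for every $m$ with $1/m<\epsilon_0$, we obtain $\P[s^{(N_m)}_K<a]\le 3\eta$ for all large $m$.

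Finally, since $\{s_K<a\}$ is open and $s^{(N_m)}_K\Rightarrow s^{(\infty)}_K$, the portmanteau inequality gives $\P[s^{(\infty)}_K<a]\le\liminf_m\P[s^{(N_m)}_K<a]\le 3\eta$, whence $\P[s^{(\infty)}_K=0]\le 3\eta$; letting $\eta\downarrow 0$ shows $s^{(\infty)}_K>0$ almost surely, and this for every $K$ completes the proof. I expect the main obstacle to be exactly this passage to the limit: convergence in $\mc{P}_{\mc{M}}$ is only coordinatewise, so a priori the atomic blocks of the approximations could all shrink to zero and disappear in the limit. The whole purpose of the hole-dynamics bound (created holes persist, with size controlled from below by the final dust) together with the macroscopic-event count is to rule this out by exhibiting $K$ blocks whose sizes are bounded below by a single constant $a$ that does not depend on $m$.
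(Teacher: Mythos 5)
Your proposal is correct and follows essentially the same route as the paper's proof: the same hole dynamics under composition of simple bridges (a new hole of size $x$ is born when the defining uniform misses the existing holes, and persists with final size at least $x$ times the final dust), the same uniform-in-$m$ binomial lower bound on the number of large holes using positivity of the limiting dust, and the same passage to the limit via the Portmanteau theorem applied to the sets $\{s_K\geq a\}$ in $\mc{P}_{\mc{M}}$. The only differences are cosmetic (stochastic domination in place of the explicit coupling of Lemma \ref{Ulemma}, and the open-set rather than closed-set form of Portmanteau).
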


For the remainder of this section we fix some $t>0$. Before we embark on the proof, we need to collect together some lemmas. For $f:[0,1]\to [0,1]$, let
$$\llip(f)=\sup\{\kappa\in[0,\infty)\-\forall y,z\in[0,1], |f(y)-f(z)|\geq \kappa |y-z|\}$$
be the lower Lipschitz constant of $f$.


\begin{lemma}\label{liplemma} Let $b_s$ be a finite bridge, where $s=(s_i)$. Then $\mathscr{D}(b_s)=\llip(b_{s})$. 
\end{lemma}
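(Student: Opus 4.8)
The plan is to unwind both sides of the claimed identity into elementary statements about the increasing step function $b_s$, and then verify them directly. Writing $b_s(y)=s_0y+\sum_{i=1}^n s_i\1\{U_i\leq y\}$ for the finite bridge (with only $s_1,\ldots,s_n$ nonzero and $s_0=1-\sum_{i\geq 1}s_i$), recall that by definition $\mathscr{D}(b_s)=s_0$. So the task reduces to showing $\llip(b_s)=s_0$. Since $b_s$ is increasing, the condition defining $\llip$ simplifies: the set of admissible $\kappa$ is exactly $[0,\inf_{y<z}(b_s(z)-b_s(y))/(z-y)]$, so that $\llip(b_s)=\inf_{y<z}\frac{b_s(z)-b_s(y)}{z-y}$. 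It therefore suffices to show this infimum equals $s_0$.

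First I would establish the lower bound $\llip(b_s)\geq s_0$. For any $y<z$ in $[0,1]$,
\begin{equation*}
b_s(z)-b_s(y)=s_0(z-y)+\sum_{i=1}^n s_i\1\{y<U_i\leq z\}\geq s_0(z-y),
\end{equation*}
because each $s_i\geq 0$. Dividing by $z-y$ shows every difference quotient is at least $s_0$, so the infimum, and hence $\llip(b_s)$, is at least $s_0$.

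For the matching upper bound I would exhibit a subinterval on which the quotient is exactly $s_0$. Because the bridge is \emph{finite}, it has only the $n$ jump locations $U_1,\ldots,U_n$, so the open set $[0,1]\sc\{U_1,\ldots,U_n\}$ contains a nondegenerate interval $(y,z)$ with no $U_i\in(y,z]$ (for instance any gap between consecutive jump points, or a neighbourhood of an endpoint). On such an interval the indicator sum above vanishes, giving $b_s(z)-b_s(y)=s_0(z-y)$ and hence difference quotient exactly $s_0$. Thus $\inf_{y<z}\frac{b_s(z)-b_s(y)}{z-y}\leq s_0$, and combined with the previous step we conclude $\llip(b_s)=s_0=\mathscr{D}(b_s)$.

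The argument is essentially routine; the only point requiring any care is the observation in the last step that a jump-free subinterval exists, which is exactly where the finiteness hypothesis on the bridge is used. (For a general, non-finite bridge the jump locations could be dense, and one would instead need to argue via a continuity/limiting statement, but that case does not arise here.)
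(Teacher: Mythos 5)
Your proof is correct and follows essentially the same route as the paper: both identify $\mathscr{D}(b_s)=s_0$ via Lemma \ref{bridgepart} and then show $\llip(b_s)=s_0$ by reading off the slope of the linear part of the bridge. The paper simply asserts $\llip(b_s)=s_0$ in one line, whereas you supply the two-sided estimate (difference quotients $\geq s_0$ everywhere, with equality on a jump-free subinterval, which exists by finiteness of the bridge); this is exactly the justification the paper leaves implicit.
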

\begin{proof}
By definition, we have $b_s(y)=(1-s_0)y+\sum_1^ns_i\1\{U_j\leq y\}$, where $(U_j)$ are uniform random variables on $[0,1]$, independent of both each other and of $s$. Hence $\llip(b_{s})=s_0$. The result follows by Lemma \ref{bridgepart}.
\end{proof}

\begin{lemma}\label{littlelemma}
If $b_s$ is a finite bridge and $b_x$ is a simple bridge then $\mathscr{D}(b_x)=1-x$ and $$\mathscr{D}(b_s\circ b_s)=\mathscr{D}(b_s)\mathscr{D}(b_x).$$ 
\end{lemma}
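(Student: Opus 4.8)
The plan is to read the displayed identity as $\mathscr{D}(b_s\circ b_x)=\mathscr{D}(b_s)\,\mathscr{D}(b_x)$ (the statement as printed writes $b_s\circ b_s$, which is an evident typo for $b_s\circ b_x$), and to prove both assertions by identifying the dust of a finite bridge with its linear coefficient through Lemma \ref{liplemma}. The first assertion is immediate: a simple bridge is the finite bridge $b_x(y)=(1-x)y+x\1\{U\leq y\}$, so in the canonical form $b_s(y)=s_0y+\sum_i s_i\1\{U_i\leq y\}$ it has $s_0=1-x$, whence $\mathscr{D}(b_x)=s_0=1-x$ by Lemma \ref{liplemma} (equivalently Lemma \ref{bridgepart}).

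For the second assertion I would first observe that $b_s\circ b_x$ is again a finite bridge, since the composition of finitely many finite bridges is a finite bridge. To read off its dust I would write the composition out explicitly. Using the convention $(b_s\circ b_x)(y)=b_x(b_s(y))$ together with $b_s(y)=s_0y+\sum_{i=1}^n s_i\1\{U_i\leq y\}$, where $s_0=\mathscr{D}(b_s)$, substitution gives
\begin{equation*}
(b_s\circ b_x)(y)=(1-x)s_0\,y+(1-x)\sum_{i=1}^n s_i\1\{U_i\leq y\}+x\,\1\{U\leq b_s(y)\}.
\end{equation*}
The first term is linear with coefficient $(1-x)s_0$, and the remaining terms are upward jumps. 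The point to check is that the last term contributes nothing to the linear part: since $b_s$ is increasing and right continuous, $\{y:b_s(y)\geq U\}$ is a half-line, so $x\,\1\{U\leq b_s(y)\}$ is a single step. Hence, pathwise, $b_s\circ b_x$ is a strictly increasing piecewise-affine function with slope $(1-x)s_0$ between its finitely many upward jumps.

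It then remains to extract the dust. By Lemma \ref{liplemma} we have $\mathscr{D}(b_s\circ b_x)=\llip(b_s\circ b_x)$, and for a function of the form just displayed the lower Lipschitz constant equals the affine slope $(1-x)s_0$. The lower bound $\llip(b_s\circ b_x)\geq(1-x)s_0$ follows by chaining the lower Lipschitz inequalities, namely $b_x(b_s(z))-b_x(b_s(y))\geq(1-x)\bigl(b_s(z)-b_s(y)\bigr)\geq(1-x)s_0(z-y)$ for $y\leq z$; the matching upper bound follows because, there being only finitely many jumps, one can choose a short interval on which $b_s$ is affine and whose image avoids the jump of $b_x$, so that the increment ratio is exactly $(1-x)s_0$. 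Combining the two bounds yields $\mathscr{D}(b_s\circ b_x)=(1-x)s_0=\mathscr{D}(b_x)\,\mathscr{D}(b_s)$, as required.

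The main obstacle I anticipate is the upper bound on the lower Lipschitz constant, which is exactly the place where finiteness of the bridge is used: it guarantees the existence of a ``clean'' interval that is mapped without meeting any jump. Without only finitely many jumps, this multiplicative behaviour of $\llip$ under composition could fail, so the argument genuinely relies on $b_s$ and $b_x$ being \emph{finite} bridges.
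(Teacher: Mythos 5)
Your proof is correct and follows essentially the same route as the paper's: both reduce the claim to Lemma \ref{liplemma} (dust equals the lower Lipschitz constant) and then observe that the local slope, namely $(1-x)s_0$, multiplies under composition --- the paper phrases this as ``composing local isometries multiplies $\llip$'' while you make it explicit by expanding $b_x(b_s(y))$ and bounding $\llip$ from both sides. You are also right that $b_s\circ b_s$ in the statement is a typo for $b_s\circ b_x$.
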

\begin{proof}
Again, by definition, we have $b_s(y)=(1-s_0)y+\sum_1^ns_i\1\{U_j\leq y\}$, where $(U_j)$ are uniform random variables on $[0,1]$, independent of both each other and of $s$. If we write the $U_i$ in increasing order as $U_{\tau(i)}$, then for each $y,z\in\big[U_{\tau(i)},U_{\tau(i+1)}\big)$, 
\begin{equation}\label{contract}
|b_s(y)-b_s(z)|=s_0|y-z|.
\end{equation} The result follows immediately from \eqref{contract} and Lemma \ref{liplemma}, since composing (local) isometries corresponds to multiplying $\llip$.
\end{proof}

We now work towards Theorem \ref{dettimethm}.

\begin{lemma}\label{dustinfo}
The following hold. 
\begin{enumerate}
\item For all $n\in\N$, $\mathscr{D}(B^{(n)}_{0,t})\geq \mathscr{D}(B^{(n+1)}_{0,t})$. 
\item The limit $D_t=\lim_{m\to\infty}\mathscr{D}(B^{(N_m)}_{0,t})$ exists almost surely.
\item $D_t$ and $\mathscr{D}(\Pi_t)$ have the same distribution.
\end{enumerate}
\end{lemma}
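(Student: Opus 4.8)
The plan is to establish the three parts in order, with essentially all of the work concentrated in part 3, whose difficulty is exactly the discontinuity of the dust functional that is flagged in the introduction.

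For part 1, I would first record an exact formula for the dust of the finite bridges. By Lemma \ref{liplemma}, $\mathscr{D}(B^{(n)}_{0,t})=\llip(B^{(n)}_{0,t})$, and the local-isometry computation \eqref{contract} underlying Lemma \ref{littlelemma} shows that each simple bridge $b_{x_i}$ scales distances by the factor $1-x_i$ away from its single jump. Since $\llip$ of a composition of such local scalings equals the product of the scaling ratios off a finite set of points, and this product does not depend on the order of composition, I obtain $\mathscr{D}(B^{(n)}_{0,t})=\prod_{i=1}^n(1-x_i)$, where $x_1>x_2>\cdots$ are the second coordinates of the points of $M\cap(0,t]\times(0,1)$. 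Part 1 is then immediate, since passing from $n$ to $n+1$ multiplies by $(1-x_{n+1})\in[0,1]$. Part 2 follows at once: $(\mathscr{D}(B^{(n)}_{0,t}))_n$ is non-increasing and bounded below by $0$, hence convergent, and because $(N_m)$ is increasing with $N_m\to\infty$, the subsequence $\mathscr{D}(B^{(N_m)}_{0,t})$ converges to the same almost sure limit $D_t=\prod_{i=1}^\infty(1-x_i)$.

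For part 3, I would first note that $\mathscr{D}(B^{(N_m)}_{0,t})=\mathscr{D}(\pi^{(N_m)}_t)$, by the definition of $\mathscr{D}$ on bridges together with Lemma \ref{bridgepart}, and that $\pi^{(\infty)}_t\stackrel{d}{=}\Pi_t$ by Theorem \ref{lconstrbl}. Since part 2 gives $\mathscr{D}(B^{(N_m)}_{0,t})\to D_t$ almost surely, it suffices to show that $D_t$ and $\mathscr{D}(\pi^{(\infty)}_t)$ have the same law. The main obstacle is that the dust functional $s\mapsto\mathscr{D}(s)=1-\sum_i s_i$ is \emph{not} continuous on $\mc{P}_{\mc{M}}$ (mass can escape into dust in the limit), so the distributional convergence $|\pi^{(N_m)}_t|^\downarrow\to|\pi^{(\infty)}_t|^\downarrow$ of Theorem \ref{lconstrbl} does not transfer directly to the dust. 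I would resolve this by combining a one-sided semicontinuity bound with an exact matching of means. Using Lemma \ref{skoruse} and Fatou's lemma for the coordinatewise-convergent sums $\sum_i s_i$, the map $\mathscr{D}$ is upper semicontinuous on $\mc{P}_{\mc{M}}$. Hence, for any bounded continuous increasing $\phi$, the map $s\mapsto\phi(\mathscr{D}(s))$ is bounded and upper semicontinuous, and the portmanteau theorem applied to $|\pi^{(N_m)}_t|^\downarrow\to|\pi^{(\infty)}_t|^\downarrow$ gives $\limsup_m\E[\phi(\mathscr{D}(\pi^{(N_m)}_t))]\leq\E[\phi(\mathscr{D}(\pi^{(\infty)}_t))]$. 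The left-hand side converges to $\E[\phi(D_t)]$ by bounded convergence, so $\E[\phi(D_t)]\leq\E[\phi(\mathscr{D}(\pi^{(\infty)}_t))]$ for all such $\phi$; that is, $\mathscr{D}(\pi^{(\infty)}_t)$ stochastically dominates $D_t$.

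It then remains to show the two variables have equal expectation. On one hand, the Poisson multiplicative (exponential) formula applied to $D_t=\prod_{(s,x)\in M\cap(0,t]\times(0,1)}(1-x)$ gives $\E[D_t]=\exp(-t\int_0^1 x\,\nu(dx))=e^{-t\mu^{-1}}$, which is finite because $\mu^{-1}<\infty$. On the other hand, $\E[\mathscr{D}(\pi^{(\infty)}_t)]=\E[\mathscr{D}(\Pi_t)]=\P[\{1\}\in\Pi_t]$ by exchangeability and Lemma \ref{exchparts}, and a tagged-individual computation (individual $1$ avoids every coagulation event, missing an event of parameter $x$ with probability $1-x$) again yields $e^{-t\mu^{-1}}$. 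With equal means and stochastic domination, and both variables valued in $[0,1]$, the identity $\E[\mathscr{D}(\pi^{(\infty)}_t)]-\E[D_t]=\int_0^1(\P[\mathscr{D}(\pi^{(\infty)}_t)>u]-\P[D_t>u])\,du=0$ with non-negative integrand forces the two survival functions to agree, so $D_t\stackrel{d}{=}\mathscr{D}(\pi^{(\infty)}_t)\stackrel{d}{=}\mathscr{D}(\Pi_t)$, which is part 3. The delicate step is the transition from distributional convergence of asymptotic frequencies to the dust, and the device that makes it work is precisely the pairing of the semicontinuity inequality (which only controls one tail) with the exact common value $e^{-t\mu^{-1}}$ of the means.
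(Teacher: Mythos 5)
Your proposal is correct, and parts 1 and 2 coincide with the paper's own argument: both derive $\mathscr{D}(B^{(n)}_{0,t})=\prod_{i\leq n}(1-x_{\sigma(i)})$ from the local-isometry computation behind Lemma \ref{littlelemma} and conclude by monotonicity. For part 3, however, you take a genuinely different route. The paper applies the Skorohod representation theorem to the convergence $|\pi^{(N_m)}_t|^\downarrow\to|\Pi_t|^\downarrow$, uses Lemma \ref{skoruse} to obtain almost sure coordinatewise convergence $s^m_i\to s^\infty_i$, and then passes the sums $\sum_i s^m_i$ to the limit by an appeal to dominated convergence. You instead observe that $\mathscr{D}$ is only upper semicontinuous on $\mc{P}_{\mc{M}}$, extract from the portmanteau theorem the one-sided conclusion that $\mathscr{D}(\pi^{(\infty)}_t)$ stochastically dominates $D_t$, and then force equality in distribution by computing both means exactly: $\E[D_t]=e^{-t\mu^{-1}}$ from the Poisson exponential formula, and $\E[\mathscr{D}(\Pi_t)]=\P[\{1\}\in\Pi_t]=e^{-t\mu^{-1}}$ from exchangeability and the tagged-particle rate. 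Your version is longer, but it squarely confronts the discontinuity of $s\mapsto 1-\sum_i s_i$ on $\mc{P}_{\mc{M}}$ (the only available pointwise bound there is $s^m_i\leq 1/i$, which is not summable, so the paper's dominated-convergence step is the one place where its argument is terse); the price you pay is importing the identity $\P[\{1\}\in\Pi_t]=e^{-t\mu^{-1}}$, which is true but is essentially the content of Pitman's proof of Theorem \ref{pitmansthm}, and should be supported by a citation or a short limiting argument over the restrictions $\Pi^{(n)}_t$ rather than the informal ``avoids every coagulation event'' heuristic. With that reference supplied, your argument is complete.
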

\begin{proof}
Using Lemma \ref{littlelemma} iteratively on \eqref{Bnt},
\begin{equation}\label{dusteq}
\mathscr{D}\l(B^{(n)}_{0,t}\r)=\prod\limits_{i=1}^n(1-x_{\sigma(i)})
\end{equation}
where $\sigma=\sigma_n$. Statement \textit{1} follows immediately. Hence, the limit $D_t=\lim_{m\to\infty}\mathscr{D}(B^{(N_m)}_{0,t})$ exists almost surely (since monotone decreasing sequences converge), which proves \textit{2}. Note that in fact $D_t=\lim_{n\to\infty}\mathscr{D}(B^{(n)}_{0,t})$ (although we will not have need of this fact).

Note that \textit{3} concerns only the distribution $\Pi_t$ and $\pi^{(\infty)}_t$. Let us write $$B^{(N_m)}_{0,t}=b_{(s^m_i)_{i=1}^\infty},$$ so that $|\pi^{(N_m)}_t|^{\downarrow}=(s^m_i)$, and similarly write $|\Pi_t|^{\downarrow}=(s^\infty_i)$. By Theorem \ref{lconstrbl}, $|\pi^{(N_m)}_t|^{\downarrow}\to |\Pi_t|^{\downarrow}$ in distribution. By the compactness of $\mc{P}_\mc{M}$, the Skorohod Representation Theorem implies that we may assume (in as far as proving \textit{3} is concerned) that $|\pi^{(N_m)}_t|^{\downarrow}\to |\Pi_t|^{\downarrow}$ almost surely.  Hence, by Lemma \ref{skoruse}, $s^m_i\to s^\infty_i$, for all $i\in\N$. By Lemma \ref{bridgepart}, 
$$\mathscr{D}(\Pi_t)=1-\sum_1^\infty s^\infty_i,$$
and by Lemma \ref{liplemma},
$$\mathscr{D}(\pi^{(N_m)}_t)=1-\sum_1^\infty s^m_i.$$
By dominated convergence, almost surely, $\lim_m 1-\sum_1^\infty s^m_i=1-\sum_1^\infty s^\infty_i,$
which completes the proof.
\end{proof}

\begin{remark}
A change of probability space occurs in the above proof, caused by the application of Skorohod's Representation Theorem. This change preserves the distribution of each $\pi^{(N_m)}_t$, and the limit $\Pi_t$, individually, but does not preserve their joint distribution. We will use the joint distribution in the sequel, so, outside of the above proof, we do not change our background probability space. Consequently, in \textit{3} of Lemma \ref{dustinfo}, we record only the result that $\mathscr{D}(\Pi_t)$ is equal to $D_t$ in distribution (and not almost surely).
\end{remark}

\begin{lemma}\label{Ulemma}
Let $b_s$ be a finite bridge, and let $V$ be a uniform random variable on $[0,1]$ which is independent of $b_s$. Let $(H_i)$ be the (finite sequence of) holes of $b_s$, and let $H=\cup_i H_i$. Then there exists a uniform random variable $U$ such that $U< \mathscr{D}(b_s)$ if and only if $V\notin H$.
\end{lemma}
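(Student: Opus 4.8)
The plan is to exploit the single quantitative fact that the holes of $b_s$ fill up exactly the complement of the dust. By Lemma \ref{holesfreqs} each hole $H_i$ satisfies $\mathscr{S}(H_i)=\freq(b_i)$ for the corresponding atomic block $b_i$, so summing over $i$ and using (via Lemma \ref{bridgepart}) that $\sum_i\freq(b_i)=1-\mathscr{D}(b_s)$ shows that $H$ has Lebesgue measure $1-\mathscr{D}(b_s)$, and hence $H^c=[0,1]\sc H$ has Lebesgue measure $\mathscr{D}(b_s)$. Writing $\lambda$ for Lebesgue measure on $[0,1]$ and recalling that $V$ is uniform and independent of $b_s$, this says precisely that $\P[V\notin H\mid b_s]=\mathscr{D}(b_s)$. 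The event $\{V\notin H\}$ therefore already has the correct conditional probability to be realised as $\{U<\mathscr{D}(b_s)\}$; the only work is to couple a genuinely uniform $U$ to $V$ so that the two events coincide, not merely agree in probability.

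I would do this by a measure-preserving rearrangement of $[0,1]$ that pushes all of $H^c$ to the bottom of the interval. Conditionally on $b_s$ (so that $H$, a finite union of half-open intervals, is fixed), define $R\colon[0,1]\to[0,1]$ by
$$R(v)=\begin{cases}\lambda\big([0,v]\cap H^c\big), & v\notin H,\\ \mathscr{D}(b_s)+\lambda\big([0,v]\cap H\big), & v\in H,\end{cases}$$
and set $U=R(V)$. The first branch maps $H^c$ non-decreasingly onto $[0,\mathscr{D}(b_s))$ and the second maps $H$ non-decreasingly onto $[\mathscr{D}(b_s),1)$, each transporting the Lebesgue measure of its domain onto the Lebesgue measure of its image. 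Hence the conditional law of $U$ given $b_s$ is Lebesgue measure on $[0,1]$, so $U$ is uniform (and, since this holds for every realisation of $b_s$, independent of $b_s$). By construction $U<\mathscr{D}(b_s)$ holds exactly when $R(V)$ lands in $[0,\mathscr{D}(b_s))$, i.e. exactly when $V\in H^c$, which is the desired equivalence.

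The main point to watch — and essentially the only obstacle — is the boundary behaviour at the value $\mathscr{D}(b_s)$: a priori $R(v)$ can equal $\mathscr{D}(b_s)$ on a set of $v$ of Lebesgue measure zero (the right endpoint of $H^c$, or the left endpoint of $H$), so the equivalence $\{U<\mathscr{D}(b_s)\}=\{V\notin H\}$ should be read as holding almost surely, which suffices for our purposes; it can be made to hold surely by a harmless adjustment of the inequalities on this null set. Beyond this, the argument is a routine verification, and the only genuine input is the hole-size identity $\lambda(H)=1-\mathscr{D}(b_s)$ supplied by Lemma \ref{holesfreqs}.
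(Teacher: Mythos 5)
Your proof is correct and is essentially the paper's own argument: the paper defines the same measure-preserving rearrangement (its function $f$, which stacks the dust segments onto $[0,\mathscr{D}(b_s))$ and the holes onto $[\mathscr{D}(b_s),1)$) and sets $U=f(V)$, exactly as your map $R$ does, just written via explicit sums over interval endpoints rather than via $\lambda([0,v]\cap H^c)$. Your explicit verification that $\lambda(H)=1-\mathscr{D}(b_s)$ and your remark about the null set where $R(V)=\mathscr{D}(b_s)$ are details the paper leaves to the reader, so nothing is missing.
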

\begin{proof}
Note that if $\mathscr{D}(b_s)=0$ then the result is trivial, so assume $\mathscr{D}(b_s)>0$. Let $H_1,\ldots, H_l$ be the holes of $b_s$, numbered such that $\p H_i< \p H_j$ if $i< j$. By definition, the holes are disjoint. For each hole, let $\delta H_j=\inf H_j$, recall $\p H_j=\sup H_j$, and set $\p H_0=0$, $\delta H_{l+1}=1$.

For $j=0,1,\ldots, l$, set $I_j=[\p H_j, \delta H_{j+1})$. Define a function $f:[0,1]\to[0,1]$ as follows.
\begin{equation*}
f(y)=\begin{cases}
\sum_{i=0}^{j-1}(\delta H_{i+1}-\p H_i)+(y-\delta H_{j+1}) & \text{ if }y\in I_j\\
\sum_{i=0}^{l}(\delta H_{i+1}-\p H_i)+(y-\delta H_{j+1})+\sum_{i=1}^{j-1}(\p H_i-\delta H_i) +(y-\p H_j) & \text{ if }y\in H_j\\
1 & \text{ if }y=1.
\end{cases}
\end{equation*}
A graphical demonstration of $f$ is given in Figure \ref{bridge4}. It is easily checked that $U=f(V)$ has the required properties.
\end{proof}

\begin{figure}[t]
\begin{center} 
\includegraphics[height=3.5in,width=5in]{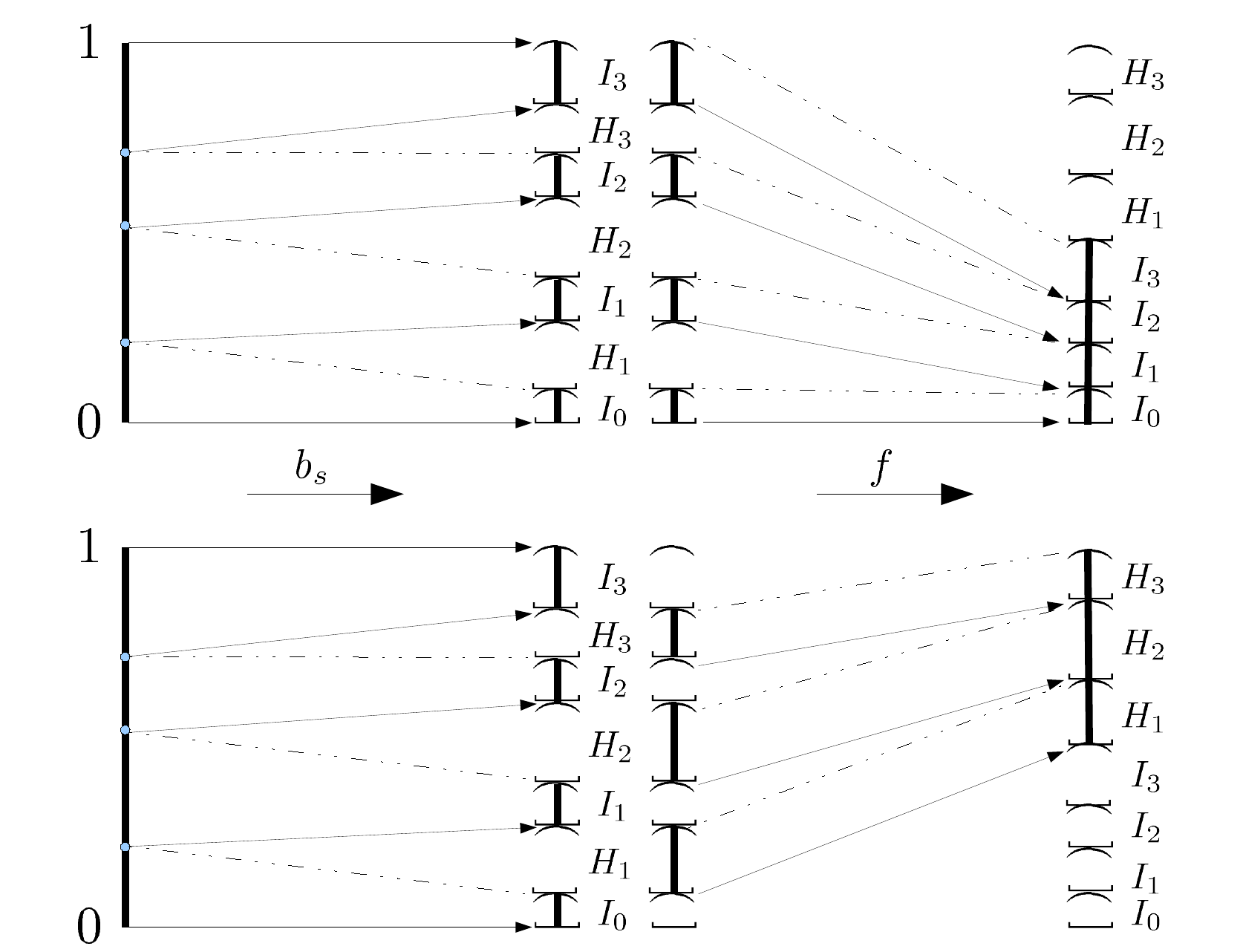} 
\caption{\label{bridge4} \small Diagram of $f$, from the proof of Lemma \ref{Ulemma}. The action of $f$ is first to stack semgents of dust on top of each other, and then stack the holes on top of that.}
\end{center} 
\end{figure}

\begin{lemma}\label{lotsofholes} Let $m\in\N$. Then there exists a sequence $(U_i)_1^\infty$ of independent uniform random variables on $[0,1]$ such that the following holds. Almost surely, for all $j=1,\ldots, m$, the bridge $B^{(N_m)}_{0,t}$ has at least 
$$\sum_{i=1}^{N_{j}}\1\{U_i\leq D_t\}$$
holes of size at least $\dfrac{D_t}{j}$.
\end{lemma}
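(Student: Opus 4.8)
The plan is to construct $B^{(N_m)}_{0,t}=b_{x_{\sigma(1)}}\circ\cdots\circ b_{x_{\sigma(N_m)}}$ by adjoining its simple bridges one at a time in the time order prescribed by $\sigma=\sigma_{N_m}$, and to track the holes as they are created and transported. For $0\le p\le N_m$ write $A_p=b_{x_{\sigma(1)}}\circ\cdots\circ b_{x_{\sigma(p)}}$ (so $A_0$ is the identity and $A_{N_m}=B^{(N_m)}_{0,t}$) and $\wt B_p=b_{x_{\sigma(p+1)}}\circ\cdots\circ b_{x_{\sigma(N_m)}}$, so that $B^{(N_m)}_{0,t}=A_p\circ\wt B_p$. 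As illustrated in Figure \ref{bridge2}, composing $A_{p-1}$ on the right with a single simple bridge $b_{x_{\sigma(p)}}$ either leaves the number of holes unchanged or increases it by exactly one, and never merges two existing holes, since a single jump cannot bridge two distinct gaps. A new hole, of size $x_{\sigma(p)}$, appears precisely when the jump point $U_{\sigma(p)}$ of $b_{x_{\sigma(p)}}$ avoids every hole of $A_{p-1}$; otherwise the jump merely enlarges an existing hole. Since holes are never destroyed, the holes of $B^{(N_m)}_{0,t}$ are in bijection with the creation steps, and I only need to count, among the creation steps of small $x$-rank, those producing a large hole.

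The hole created at step $p$ is a gap of size $x_{\sigma(p)}$ in the image of $A_p$, and since $B^{(N_m)}_{0,t}=A_p\circ\wt B_p$ this gap is carried by $\wt B_p$ to a gap of $B^{(N_m)}_{0,t}$ of size at least $\llip(\wt B_p)\,x_{\sigma(p)}=\mathscr{D}(\wt B_p)\,x_{\sigma(p)}$, by the definition of the lower Lipschitz constant together with Lemma \ref{liplemma}; moreover $\wt B_p$ is strictly increasing, so distinct created gaps are carried to distinct holes. Since $\mathscr{D}(\wt B_p)=\prod_{l=p+1}^{N_m}(1-x_{\sigma(l)})$ is a product over a subset of the factors in \eqref{dusteq}, we have $\mathscr{D}(\wt B_p)\ge\mathscr{D}(B^{(N_m)}_{0,t})\ge D_t$ by part \textit{1} of Lemma \ref{dustinfo}, whence any hole created by an event $\sigma(p)=i$ with $x_i>1/j$ has size at least $D_t/j$. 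Recalling that $i\le N_j$ is exactly the condition $x_i>1/j$, it remains to bound below, for each $j\le m$, the number of steps $p$ with $\sigma(p)\le N_j$ at which a hole is created.

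The coupling of Lemma \ref{Ulemma} supplies this bound, and this is the step I expect to require the most care, because a single sequence $(U_i)$ must work for every $j\le m$ and must consist of genuinely independent variables. Processing the steps in time order, at step $p$ the uniform $U_{\sigma(p)}$ is independent of $A_{p-1}$, so Lemma \ref{Ulemma} applied with the finite bridge $A_{p-1}$ and $V=U_{\sigma(p)}$ yields a uniform $\wt U_{\sigma(p)}$ with $\{\wt U_{\sigma(p)}<\mathscr{D}(A_{p-1})\}=\{U_{\sigma(p)}\text{ avoids the holes of }A_{p-1}\}=\{\text{step }p\text{ creates a hole}\}$. Since $\mathscr{D}(A_{p-1})$ is again a sub-product of the factors in \eqref{dusteq} and so $\mathscr{D}(A_{p-1})\ge D_t$, this gives the almost sure inclusion $\{\wt U_{\sigma(p)}\le D_t\}\sw\{\text{step }p\text{ creates a hole}\}$, with a threshold $D_t$ that does not depend on $j$. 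Because $\wt U_{\sigma(p)}$ is obtained from $U_{\sigma(p)}$ by a rearrangement that is measure preserving conditionally on $A_{p-1}$, an induction on $p$ (conditioning on $M$, which fixes $\sigma$ and the $x_i$ but is independent of the bridge uniforms) shows the $\wt U_{\sigma(p)}$ are mutually independent uniforms; relabelling by $x$-rank produces the sequence $(U_i):=(\wt U_i)$ required by the statement. Summing the inclusion over the distinct holes created by steps with $\sigma(p)\le N_j$, each of size at least $D_t/j$, shows that $B^{(N_m)}_{0,t}$ has at least $\sum_{i=1}^{N_j}\1\{U_i\le D_t\}$ holes of size at least $D_t/j$, simultaneously for all $j\le m$.
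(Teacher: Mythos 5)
Your proof is correct and follows essentially the same route as the paper's: you track hole creation along the time-ordered composition via the same dichotomy (jump point in a hole vs.\ not), couple the creation events to uniforms via Lemma \ref{Ulemma}, and bound the surviving hole sizes by $x_{\sigma(p)}\mathscr{D}(\cdot)\geq D_t/j$; transporting each created gap in one step through the tail bridge $\wt B_p$ using $\llip(\wt B_p)=\mathscr{D}(\wt B_p)$ is just a condensed version of the paper's iterated ``child'' bookkeeping. If anything, your explicit relabelling of the coupled uniforms by $x$-rank handles the ``one sequence for all $j\leq m$'' requirement more cleanly than the paper does.
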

\begin{proof}
Note that a finite bridge has only finite many holes. Fix $j\in\{1,\ldots, n\}$, and suppose $H_1,\ldots, H_l$ are the holes of some finite bridge $b_s=b_{s_1,\ldots,s_l}$. Let $x\in (0,1)$, and let $V$ denote a uniform random variable (independent of $b_s$ and $x$) such that
$$b_{x}(a)=(1-x)a+x\1\{V\leq a\}.$$
We consider what happens to the holes of $b_s$ when we compose with $b_x$ to form $b_s\circ b_x$.

The probability of $V$ falling into the set $\{\p H_i\-i=1,\ldots,l\}$ is zero, and henceforth we ignore this case (we claim only an almost sure result in the statement of this lemma). There are two further options:
\begin{enumerate}
\item[(A)] If $V\in [0,1]\sc \l(\bigcup_i H_i\cup\{\p H_i\}\r)$, then the holes of $b_s\circ b_x$ are precisely the elements of the set $$\{b_x(H_i)\-i=1,\ldots,l\}\cup\Big\{\Big[(1-x)V,\,(1-x)V+x\Big)\Big\}.$$ 
\item[(B)] If $V\in H_k=[h^k_1,h^k_2)$, for some $k$, then the holes of $b_s\circ b_x$ are precisely the elements of the set $$\big\{b_x(H_i)\-i=1,\ldots,k-1,k+1,\ldots,l\big\}\cup\Big\{\Big[b_x(h^k_1),\,b_x(h^k_2)\Big)\Big\}.$$
\end{enumerate}

\begin{remark}
The reader might wish to refer back to Figure \ref{bridge2}, where the two cases can both be seen. In that figure, (A) occurs when composing with $b_{x_{\sigma(1)}}$ with $b_{x_{\sigma(2)}}$, and (B) occurs when composing $b_{x_{\sigma(1)}}\circ b_{x_{\sigma(2)}}$ with $b_{x_{\sigma(3)}}$.   
\end{remark}

In case (A), for all $i=1,\ldots, l$ we say that the hole $b_x(H_i)$ of $b_s\circ b_x$ is a \emph{child} of $H_i$. In case (B), for $i=1,\ldots, k-1,k+1$, we say also that the hole $b_x(H_i)$ of $b_s\circ b_x$ is a child of $H_i$. Additionally, in case (B) we say that $[b_x(h^k_1),\,b_x(h^k_2))$ is a child of $H_k$. 

We label the following results for easy reference.
\begin{enumerate}
\item[(1)] Every hole of $b_s$ has a unique child amongst holes of $b_s\circ b_x$.
\item[(2)] If a hole $H'$ of $b_s\circ b_x$ is a child of the hole $H$ of $b_s$, then $\mathscr{S}(H')\geq (1-x)\mathscr{S}(H)$.
\item[(3)] In case (A), $b_s\circ b_x$ has one more hole than $b_s$, and in case (B) $b_s\circ b_x$ has the same number of holes as $b_x$. In case (A), the new hole has size $x$.
\end{enumerate}
The first of the above statements is essentially immediate, since $b_x$ is a strictly increasing function with (by Lemma \ref{liplemma}) $\llip(b_x)>0$. The third statement is immediate from (A) and (B). The second statement follows from Lemma \ref{liplemma}, except in case (B) for the hole $H=H_k=[h^k_1,h^k_2)$. In this case $H'=\big[b_x(h^k_1),b_x(h^k_2)\big)$, and a direct calculation shows that
$$\mathscr{S}(H')=b_x(h_2^k)-b_x(h_1^k)=(1-x)(h^k_2-h^k_1)+x=(1-x)\mathscr{S}(H)+x.$$

By Lemma \ref{holesfreqs}, the sum of the sizes of the holes of $b_s$ is precisely $1-\mathscr{D}(b_s)=s_0$ where $s_0=1-\sum_1^\infty s_i$. By Lemma \ref{Ulemma}, there is a random variable $U\in\sigma(V)$ such that 
\begin{equation}\label{Udef}
\1\{U< \mathscr{D}(b_s)\}=\1\{V\text{ is not an element of a hole of }b_s\}.
\end{equation}
Since $V$ is independent of $\mathscr{D}(b_s)$, $U$ is independent of $\mathscr{D}(b_s)$. We record a fourth statement, which is also immediate.
\begin{itemize}
\item[(4)] Case (A) occurs precisely when \eqref{Udef}$=1$, and case (B) occurs precisely when \eqref{Udef}$=0$.
\end{itemize}

Now, fix $m\in\N$, and let $\sigma=\sigma_{N_m}$. Loosely speaking, we will apply the above reasoning iteratively along the composition $b^{(N_m)}=b_{x_{\sigma(1)}}\circ\ldots\circ b_{x_{\sigma(N_m)}}$, working from left to right. For each $i=2,\ldots,N_m$, at the $i^{th}$ stage we set
\begin{align*}
b_s&=b_{x_{\sigma(1)}}\circ\ldots\circ b_{x_{\sigma(i-1)}}\\
b_x&=b_{x_{\sigma(i)}}.
\end{align*}
We divide the remainder of the argument into two stages.

\textsc{Construction of $(U_i)$:} Let $(V_i)_1^\infty$ be a sequence of independent uniform random variables on $[0,1]$ which are independent of $\sigma(\delta(M\cap(0,t)\times(0,1)))$, and use $V_i$ as the uniform random variable (written as $V$ above), at stage $i$ of the iteration. It follows immediately that $U_i$, defined (at each step of the iteration) by \eqref{Udef}, is also a sequence of independent uniform random variables on $[0,1]$.

\textsc{Construction of the holes:} Fix $j\in\{1,\ldots,m\}$. Recall that the sets $\{x_i\-i=1,\ldots, N_j\}$ and $\{x_{\sigma(i)}\-i=1,\ldots, N_j\}$ are in bijective correspondence. Hence, by definition of $N_j$, 
\begin{itemize}
\item[(5)] There are at least $N_j$ elements of $\{x_{\sigma(i)}\-i=1,\ldots, N_j\}$ for which $x_{\sigma(i)}>1/j$. 
\end{itemize}
Let $i'$ be some $i'\in\{1,\ldots, N_j\}$ such that $x_{\sigma(i')}>1/j$. If $V_{i'}$ falls into a hole of $$b_s=b_{x_{\sigma(1)}}\circ\ldots\circ b_{x_{\sigma(i'-1)}}$$ (or equivalently, by (4), if $U_i\leq \mathscr{D}(b_s)$), then, by (3), the new hole $H$ of $b_s\circ b_{x_{\sigma(i')}}$ has size $x_{\sigma(i')}\geq 1/j$. By (1), following the line of children of $H$, we reach a hole $\wt{H}$ of $b^{(N_m)}_{0,t}$, and by (2) 
$$\mathscr{S}\l(\wt{H}\r)\geq x_{\sigma(i')}\prod\limits_{i=i'+1}^{N_m}(1-x_{\sigma(i)}).$$
By \eqref{dusteq}, 
$$\prod_{i=i'+1}^{N_m}(1-x_{\sigma(i)})\geq \prod_{i=1}^{N_m}(1-x_{\sigma(i)})=\mathscr{D}\l(B^{(N_m)}_{0,t}\r).$$
By Lemma \ref{dustinfo}, $\mathscr{D}(B^{(N_m)}_{0,t})\geq D_t$, so in fact
\begin{equation}\label{holesize}
\mathscr{S}\l(\wt{H}\r)\geq x_{\sigma(i')}D_t\geq \frac{D_t}{j}.
\end{equation}

To summarise, by (5), there are at least $N_j$ possible distinct choices for $i'\in\{1,\ldots,N_m\}$ for which $x_{\sigma(i')}>1/j$. By (4), when $U_{i'}\leq \mathscr{D}(b_s)$, each such $i'$ gives rise to a hole of $B^{(N_m)}_{0,t}$ which satifies \eqref{holesize}. By the uniqueness of children in (1), each $i'$ for which $U_{i'}\leq \mathscr{D}(b_s)$ gives rise to a unique hole of $B^{(N_m)}_{0,t}$ (which, of course, satisfies \eqref{holesize}). Since the $(U_i)$ are independent of $N_j$, this completes the proof.
\end{proof}

\begin{lemma}\label{closedsets}
For all $\epsilon>0$ and $N\in\N$, the set
$$\mathcal{B}(N,\epsilon)=\{(s_i)_1^\infty\in\mc{P}_{\mc{M}}\-\forall i=1,\ldots,N,\; s_i\geq \epsilon\}$$
is a closed subset of $\mc{P}_{\mc{M}}$.
\end{lemma}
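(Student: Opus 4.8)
The plan is to prove closedness via the sequential characterization, which is available since $\mc{P}_{\mc{M}}$ is a metric space. Concretely, I would take an arbitrary sequence $(s^n)_{n\geq 1}$ with each $s^n \in \mathcal{B}(N,\epsilon)$ and $d(s^n, s) \to 0$ for some $s \in \mc{P}_{\mc{M}}$, and show that the limit $s$ itself lies in $\mathcal{B}(N,\epsilon)$. Establishing this for all such sequences shows $\mathcal{B}(N,\epsilon)$ is sequentially closed, hence closed.

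The key reduction is to convert convergence in the metric $d$ into coordinatewise convergence. This is exactly the content of Lemma \ref{skoruse}: since $d(s^n,s)\to 0$, we have $s^n_i \to s_i$ for every $i \in \N$. Write $s^n = (s^n_i)_{i=1}^\infty$ and $s = (s_i)_{i=1}^\infty$. Now fix any index $i \in \{1,\ldots,N\}$. Membership of each $s^n$ in $\mathcal{B}(N,\epsilon)$ gives $s^n_i \geq \epsilon$ for all $n$, and passing to the limit in this weak inequality (equivalently, using that the half-interval $[\epsilon,1]$ is a closed subset of $[0,1]$) yields $s_i = \lim_{n\to\infty} s^n_i \geq \epsilon$. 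Since $i$ was an arbitrary index in $\{1,\ldots,N\}$, the limit point $s$ satisfies the defining condition of $\mathcal{B}(N,\epsilon)$, so $s \in \mathcal{B}(N,\epsilon)$, as required.

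There is no real obstacle here; the statement is a routine topological fact and the only ingredient beyond the definition of closedness is Lemma \ref{skoruse}, which supplies the equivalence between metric convergence in $\mc{P}_{\mc{M}}$ and coordinatewise convergence. The one point worth stating explicitly (so the reader sees why only finitely many constraints matter) is that the condition defining $\mathcal{B}(N,\epsilon)$ involves only the first $N$ coordinates, each of which converges by Lemma \ref{skoruse}; the preservation of the weak inequality $s_i \geq \epsilon$ under limits is precisely what makes the set closed rather than merely relatively open. No properties of the tail of the sequences, nor of the summability constraint $\sum_i s_i \leq 1$, are needed beyond the fact that $s \in \mc{P}_{\mc{M}}$ is given.
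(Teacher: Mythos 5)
Your proof is correct and is exactly the paper's argument, just written out in full: the paper's one-line proof also invokes Lemma \ref{skoruse} to reduce metric convergence to coordinatewise convergence and then observes that the finitely many closed constraints $s_i\geq\epsilon$ are preserved in the limit. No differences to report.
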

\begin{proof}
This follows from Lemma \ref{skoruse}, since $\mathcal{B}(N,\epsilon)$ is closed under pointwise convergence.
\end{proof}

\begin{proof}[Of Theorem \ref{dettimethm}.]
The argument rests on an application of the Portmanteau Theorem, which can be found as Theorem 3.1 in \cite{EK1986}. The precise fact we require is the following.
\begin{itemize}
\item[($\star$)] Let $(S,d_S)$ be a separable metric space, and let $\Q_n,\Q$ (where $n\in\N$) be probability measures on the Borel subsets of $S$. Let $X_n$ and $X$ be $S$-valued random variables with distributions $\Q_n$ and $\Q$, repectively. Then, $X_n\to X$ in distribution if and only if, for all closed subsets $C$ of $S$, $\limsup_n \Q_n(C)\leq \Q(C)$.
\end{itemize}
Of course, we will use the closed sets described by Lemma \ref{closedsets}, and the convergence in distribution from Theorem \ref{lconstrbl}.

Let $\delta>0$ and let $N\in\N$. By Lemma \ref{dustinfo}, $D_t$ has the same distribution as $\mathscr{D}(\Pi_t)$, and by Theorem \ref{sthm}, $\P\l[\mathscr{D}(\Pi_t)>0\r]=1$. 
Hence, we may choose $\epsilon>0$ such that 
\begin{equation}\label{eqqq1}
\P\l[D_t\geq \epsilon\r]\geq 1-\delta.
\end{equation} Since $\nu([0,1])=\infty$ we have that $\lim_{j\to\infty}N_j=\infty$, almost surely. Hence, we may pick $j\in\N$ such that 
\begin{equation}\label{eqqq2}\P\l[N_j\geq N^2\r]\geq 1-\delta.
\end{equation} 
We will use \eqref{eqqq1} and \eqref{eqqq2} frequently in the following estimates. For all $m\geq j$,
\begin{align*}
&\P\l[B^{(N_m)}_{0,t}\text{ has at least }N\text{ holes of size}\geq\frac{\epsilon}{j}\r]\\
&\hspace{4pc}\geq \P\l[B^{(N_m)}_{0,t}\text{ has at least }N\text{ holes of size}\geq\frac{D_t}{j}\r]-\delta\\
&\hspace{4pc}\geq \P\l[B^{(N_m)}_{0,t}\text{ has at least }\sum_1^{N_j}\1\{U_i\leq D_t\}\text{ holes of size}\geq\frac{D_t}{j}\r]\\
&\hspace{8pc}-\P\l[\sum_1^{N_j}\1\big\{U_i\leq D_t\big\}\leq N\r]-\delta\\
&\hspace{4pc}=1-\P\l[\sum_1^{N_j}\1\big\{U_i\leq D_t\big\}\leq N\r]-\delta.
\end{align*}
To get from the penultimate to the final line of the above we use Lemma \ref{lotsofholes}. Now, 
\begin{align*}
\P\l[\sum_1^{N_j}\1\big\{U_i\leq D_t\big\}\leq N\r]&\leq \P\l[\sum_1^{N^2}\1\big\{U_i\leq D_t\big\}\leq N\r]+\delta\\
&\leq \P\l[\sum_1^{N^2}\1\big\{U_i\leq\epsilon\big\}\leq N\r]+2\delta
\end{align*}
Note that $\sum_1^{N^2}\1\big\{U_i\leq\epsilon\big\}$ is just a binomial random variable with $N^2$ trials and success probability $\epsilon$. Applying Hoeffding's inequality, we obtain
\begin{align*}
\P\l[\sum_1^{N^2}\1\big\{U_i\leq\epsilon\big\}\leq N\r]\leq \frac{1}{2}\exp\l(-2\frac{(N^2\epsilon-N)^2)}{N^2}\r)=\frac{1}{2}\exp\l(-2(N\epsilon-1)^2\r)
\end{align*}
Collecting together, we have that for all $m\geq j$,
\begin{align}\label{iieq3}
\P\l[B^{(N_m)}_{0,t}\text{ has at least }N\text{ holes of size}\geq\frac{\epsilon}{j}\r]\geq 1-\frac{1}{2}\exp\l(-2(N\epsilon-1)^2\r)-3\delta
\end{align}
Equation \eqref{iieq3} is the crucial bound, and we can now move towards applying ($\star$). Note that the right hand side of \eqref{iieq3} does not depend on $m$. 

Let $\Q_m$ and $\Q$ be the laws of $\pi^{(N_m)}_t$ and $\Pi_t$, respectively. Then, by Lemma \ref{holesfreqs}
$$\P\l[B^{(N_m)}_{0,t}\text{ has at least }N\text{ holes of size}\geq\frac{\epsilon}{j}\r]=\Q_m(\mc{B}(N,\epsilon/j)).$$
Similarly, 
$$\Q(\mc{B}(N,\epsilon/j))=\P\Big[\Pi_t\text{ has at least }N\text{ blocks with asymptotic frequency}\geq\epsilon/j\Big].$$
By Theorem \ref{lconstrbl}, $\pi^{(N_m)}_t\to\Pi_t$ in distribution, and by ($\star$), and Lemma \ref{closedsets},
$$\limsup\limits_{m\to\infty}\Q_m(\mc{B}(N,\epsilon/j))\leq \Q(\mc{B}(N,\epsilon/j)).$$
Hence, by \eqref{iieq3},
\begin{samepage}
\begin{align*}
&\P\Big[\Pi_t\text{ has at least }N\text{ blocks with asymptotic frequency}\geq\epsilon/j\Big]\\
&\hspace{19pc}\geq 1-\frac{1}{2}\exp\l(-2(N\epsilon-1)^2\r)-3\delta.
\end{align*}
\end{samepage}
Recall that $j$ depends on $N$, but $\delta$ and $\epsilon$ do not depend on $N$. Recall also that $\epsilon$ depends on $\delta$, but $j$ and $N$ do not depend on $\delta$. From the above equation we have
$$\P\Big[\Pi_t\text{ has at least }N\text{ blocks with asymptotic frequency}>0\Big]\geq 1-\frac{1}{2}\exp\l(-2(N\epsilon-1)^2\r)-3\delta,$$
and letting $N\to\infty$,
$$\P\Big[\Pi_t\text{ has infinitely many blocks with asymptotic frequency}>0\Big]\geq 1-3\delta.$$
Since $\delta>0$ was arbitrary, we thus have
$$\P\Big[\Pi_t\text{ has infinitely blocks with asymptotic frequency}>0\Big]=1,$$
and the proof is complete.
\end{proof}

\subsection{An embedded $\Lambda$-coalescent}

In this section we show that, in a sense made precise by Theorem \ref{lsub}, the restriction of the $\Lambda$-coalescent to any infinite subset of its particles is also a $\Lambda$-coalescent. We then combine this with Theorem \ref{dettimethm} to complete the proof of Theorem \ref{uscthm}.

In this section we will consider $\Lambda$-coalescents with initial time $t>0$ (instead of $t=0$, which was specified by Definition \ref{lambdacoaldef}). We extend Definition \ref{lambdacoaldef} in the obvious manner.

Let $\mc{F}_t=\sigma(\Pi_s\-s\leq t)$. 

\begin{normaltheorem}\label{lsub}
Let $T>0$, and let $\mc{W}=\{w_i\-i\in\N\}$ be a random, $\mc{F}_T$ measurable, subset of $\N$, such that $w_i< w_j$ if $i<j$. Suppose that $\mc{W}$ contains precisely one element of each block $b\in\Pi_T$.

For any $n\in\N$, let $\bl_{\Pi_t}(n)$ be the (unique) block $b\in\Pi_t$ such that $n\in b$. Define a process $(\wt{\Pi}_t)_{t\geq T}$ taking values in $\mc{P}_\N$ by
\begin{equation}\label{iieq4}
i\stackrel{\wt{\Pi}_t}{\sim} j\Longleftrightarrow \bl_{\Pi_t}(w_i)=\bl_{\Pi_t}(w_j).
\end{equation}
Then $(\wt{\Pi}_t)_{t\geq T}$ is a $\Lambda$-coalescent.
\end{normaltheorem}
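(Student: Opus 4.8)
The plan is to verify the defining property of Definition \ref{lambdacoaldef}, extended to initial time $T$: it suffices to show that for every $n\in\N$ the restriction $\wt{\Pi}^{(n)}_t:=\iota_n(\wt{\Pi}_t)$ is a $\mc{P}_n$-valued Markov chain, started from the partition of $\{1,\ldots,n\}$ into singletons at time $T$, in which any fixed $k$-tuple of the $i$ currently present blocks merges at rate $\lambda_{i,k}$. Consistency of these restrictions over $n$ is automatic, since $\wt{\Pi}_t$ is defined directly as a partition of $\N$ by \eqref{iieq4}. As $\mc{W}$ is $\mc{F}_T$-measurable, I would first condition on $\mc{F}_T$; this fixes both $\Pi_T$ and the representatives $w_1<w_2<\ldots$, and by the Markov property of the $\Lambda$-coalescent at the deterministic time $T$ the future $(\Pi_t)_{t\geq T}$ evolves as a $\Lambda$-coalescent issued from $\Pi_T$. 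Fixing $n$ and setting $N=\max\{w_1,\ldots,w_n\}$, I would use that $w_i\stackrel{\Pi_t}{\sim}w_j\iff w_i\stackrel{\Pi^{(N)}_t}{\sim}w_j$ whenever $N\geq\max(w_i,w_j)$, so that $\wt{\Pi}^{(n)}_t$ is a deterministic function of the \emph{finite} Markov chain $\Pi^{(N)}_t$, recording which of the $n$ distinct blocks $\bl_{\Pi^{(N)}_T}(w_1),\ldots,\bl_{\Pi^{(N)}_T}(w_n)$ have merged by time $t$.

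The heart of the argument is a lumpability computation. Call a block of $\Pi^{(N)}_t$ \emph{marked} if it contains one of $w_1,\ldots,w_n$; when $\wt{\Pi}^{(n)}_t$ has $p$ blocks there are exactly $p$ marked blocks and, say, $I-p$ unmarked ones. A transition of $\wt{\Pi}^{(n)}$ occurs precisely when a coagulation event of $\Pi^{(N)}$ merges at least two marked blocks, whereas merging unmarked blocks into a marked super-block, or merging only unmarked blocks, leaves $\wt{\Pi}^{(n)}$ unchanged. Hence the rate at which a fixed $k$-subset (with $k\geq 2$) of the marked blocks coalesces into a single block of $\wt{\Pi}^{(n)}$ is obtained by summing the rate $\lambda_{I,k+l}$ of the underlying $(k+l)$-mergers over all numbers $l$ of unmarked blocks that merge simultaneously:
\begin{equation*}
\sum_{l=0}^{I-p}\binom{I-p}{l}\lambda_{I,k+l}=\int_0^1 x^{k-2}(1-x)^{I-k}\sum_{l=0}^{I-p}\binom{I-p}{l}\l(\frac{x}{1-x}\r)^l\Lambda(dx)=\int_0^1x^{k-2}(1-x)^{p-k}\Lambda(dx)=\lambda_{p,k}.
\end{equation*}
The decisive feature is that this rate depends on the underlying state of $\Pi^{(N)}$ only through $p$, which equals the number of blocks of the lumped state $\wt{\Pi}^{(n)}_t$; strong lumpability therefore applies and identifies $\wt{\Pi}^{(n)}$ as a Markov chain with exactly the $\Lambda$-coalescent rates $\lambda_{p,k}$. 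Since the $w_i$ lie in distinct blocks of $\Pi_T$, this chain starts from singletons at time $T$.

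Because the conditional law so obtained is the same for every realisation of $\mc{F}_T$ (in particular it is independent of the rule used to select $\mc{W}$), it is also the unconditional law of $(\wt{\Pi}^{(n)}_t)_{t\geq T}$; letting $n$ range over $\N$ and invoking Definition \ref{lambdacoaldef} then completes the proof. I expect the main obstacle to be the lumpability step, namely checking that every $\wt{\Pi}^{(n)}$-transition is correctly counted once the concurrent unmarked mergers are summed out, and that the resulting rate has no residual dependence on $I$ or on the identities of the unmarked blocks. The binomial identity displayed above is precisely what secures this, and it is the same cancellation that underlies the sampling consistency $\lambda_{i,k}=\lambda_{i+1,k}+\lambda_{i+1,k+1}$ of the $\Lambda$-coalescent rates.
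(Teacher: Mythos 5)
Your proposal is correct, and its overall strategy matches the paper's: apply the Markov property at the deterministic time $T$, identify the blocks of $\wt{\Pi}_t$ with the (disjoint) blocks of $\Pi_t$ containing the representatives $w_i$, and verify that each $k$-tuple of these merges at rate $\lambda_{p,k}$. Where you differ is in how the rate verification is carried out. The paper works directly with the infinite coalescent and asserts the rate by citing Definition \ref{lambdacoaldef} together with the Markov property, leaving implicit exactly the point you isolate: the tracked blocks form only a subset of the blocks of any finite restriction $\Pi^{(N)}$, so a merger of the chosen $k$ marked blocks may be accompanied by any number of unmarked blocks, and these must be summed out. Your lumpability computation
$$\sum_{l=0}^{I-p}\binom{I-p}{l}\lambda_{I,k+l}=\lambda_{p,k}$$
supplies precisely the missing cancellation; it is the same sampling-consistency identity that makes the restrictions $\Pi^{(n)}$ compatible in Definition \ref{lambdacoaldef}, and checking that it depends on the underlying state only through $p$ is what legitimises the passage to the lumped chain. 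The one point requiring care --- that a transition of $\wt{\Pi}^{(n)}$ merging a given $k$-subset of marked blocks must not simultaneously absorb a further marked block --- is handled correctly, since your sum ranges only over unmarked blocks. In short, your argument is a more explicit (and arguably more complete) rendering of the step the paper describes as following ``by Theorem \ref{strongmarkov} and Definition \ref{lambdacoaldef}''.
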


In order to prove Theorem \ref{lsub} we will need the following result.

\begin{normaltheorem}[\citealt{P1999}]\label{strongmarkov}
The $\Lambda$-coalescent is strongly $\mc{F}_t$-Markov.
\end{normaltheorem}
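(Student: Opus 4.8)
The plan is to establish the Feller property of the $\Lambda$-coalescent on a compact metrization of its state space and then deduce the strong Markov property by the standard dyadic approximation of the stopping time. First I would topologize $\mc{P}_\N$ by the ultrametric $d(\pi,\pi')=2^{-\sup\{n\-\iota_n(\pi)=\iota_n(\pi')\}}$ (with $2^{-\infty}=0$), under which $\mc{P}_\N$ is compact and a function is continuous precisely when it factors through $\iota_n$ for some finite $n$. Such \emph{cylinder functions} form a point-separating subalgebra of $C(\mc{P}_\N)$, so by Stone--Weierstrass they are dense. Since the merger rates $\lambda_{i,k}$ of Definition \ref{lambdacoaldef} make sense started from an arbitrary $\pi\in\mc{P}_\N$, I write $P_tf(\pi)=\E^\pi[f(\Pi_t)]$ for the associated transition semigroup, and I reduce the whole argument to cylinder functions $f=\tilde f\circ\iota_n$.

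The two structural inputs are both consequences of consistency of the restrictions. For the semigroup, consistency says that when $\Pi_0=\pi$ the restricted process $(\iota_n(\Pi_t))_{t\geq0}$ is a $\mc{P}_n$-valued Markov chain whose law depends on $\pi$ only through $\iota_n(\pi)$; hence $P_tf(\pi)=\E^{\iota_n(\pi)}[\tilde f(\Pi^{(n)}_t)]$ is again a cylinder function, and in particular $P_t$ maps $C(\mc{P}_\N)$ into itself. Moreover $P_tf\to f$ as $t\downarrow0$, because the finite chain $\Pi^{(n)}$ leaves its starting state only after an exponential holding time, so $\P^{\iota_n(\pi)}[\Pi^{(n)}_t\neq\iota_n(\pi)]\to0$. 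This is exactly the Feller property. For the paths, each $\Pi^{(n)}$ is a continuous-time Markov chain on the finite set $\mc{P}_n$ and so has \cadlag{} sample paths; consistency together with the ultrametric then forces $t\mapsto\Pi_t$ to be right-continuous (indeed \cadlag).

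With these in hand I would run the dyadic approximation. Given an $(\mc{F}_t)$-stopping time $\tau$, set $\tau_k=(\lfloor 2^k\tau\rfloor+1)2^{-k}\downarrow\tau$; each $\tau_k$ is a stopping time valued in the dyadic grid, and $\mc{F}_\tau\sw\mc{F}_{\tau_k}$. For a cylinder function $f$ and $A\in\mc{F}_{\tau_k}$, decomposing $A=\bigcup_d\big(A\cap\{\tau_k=d\}\big)$ over the countably many dyadic values $d$ (each $A\cap\{\tau_k=d\}\in\mc{F}_d$) and applying the simple Markov property at each deterministic time $d$ yields
\begin{equation*}
\E\big[f(\Pi_{\tau_k+s})\,\1_A\big]=\E\big[(P_sf)(\Pi_{\tau_k})\,\1_A\big],
\end{equation*}
that is, $\E[f(\Pi_{\tau_k+s})\mid\mc{F}_{\tau_k}]=(P_sf)(\Pi_{\tau_k})$. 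Now let $k\to\infty$: since $\tau_k\downarrow\tau$ and $\tau_k+s\downarrow\tau+s$, right-continuity gives $\Pi_{\tau_k}\to\Pi_\tau$ and $\Pi_{\tau_k+s}\to\Pi_{\tau+s}$, while continuity of the cylinder functions $f$ and $P_sf$ gives convergence of both integrands. Bounded convergence together with $\mc{F}_\tau\sw\mc{F}_{\tau_k}$ then passes the identity to the limit, producing $\E[f(\Pi_{\tau+s})\mid\mc{F}_\tau]=(P_sf)(\Pi_\tau)$. Density of cylinder functions in $C(\mc{P}_\N)$ extends this to all $f\in C(\mc{P}_\N)$, which is the strong Markov property.

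I expect the main obstacle to be the path-regularity and limit-passage step, rather than the underlying algebra. One must verify carefully that the $\mc{P}_\N$-valued process is genuinely right-continuous, so that $\Pi_{\tau_k}$ and $\Pi_{\tau_k+s}$ converge to the correct limits, and that the semigroup really preserves the dense class of cylinder functions; it is precisely the interplay of these two facts that lets the conditional-expectation identity survive the passage $k\to\infty$.
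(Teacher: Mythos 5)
The paper itself offers no proof of this statement: it is imported verbatim from \cite{P1999}, so the only meaningful comparison is with the argument underlying that citation, and yours is essentially it. Pitman establishes exactly this route: consistency of the restricted chains $\Pi^{(n)}$ yields a semigroup preserving cylinder functions on the compact ultrametric space $\mc{P}_\N$, hence the Feller property, and the strong Markov property then follows by the standard dyadic-approximation argument for Feller processes with right-continuous paths, which you carry out correctly. One small misstatement: a function on $\mc{P}_\N$ is \emph{not} continuous ``precisely when'' it factors through some $\iota_n$ --- for instance $\pi\mapsto\sum_n 2^{-n}\1\{1\stackrel{\pi}{\sim}n\}$ is continuous but factors through no finite level --- and indeed this claim would make your subsequent Stone--Weierstrass step vacuous; but since your argument only uses that cylinder functions are continuous and uniformly dense in $C(\mc{P}_\N)$, which is true, the slip is harmless. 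Two further points worth making explicit if you write this up in full: restrict to $\{\tau<\infty\}$ before approximating dyadically, and note that the identity $\E[f(\Pi_{\tau+s})\mid\mc{F}_\tau]=(P_sf)(\Pi_\tau)$ for all $s$ and all $f\in C(\mc{P}_\N)$ upgrades to the full strong Markov property (the conditional law of the post-$\tau$ path given $\mc{F}_\tau$ is that of a $\Lambda$-coalescent started from $\Pi_\tau$) by iterating it over finitely many time points and a monotone class argument.
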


\begin{proof}[Of Theorem \ref{lsub}]
Note that the map $i\mapsto w_i$ is a bijection from $\N\to \N$, and hence $\wt{\Pi}_t$ is well defined. Define $\eta:\N\to\N$ by $\eta(w_i)=i$. The initial state of $\wt{\Pi}_t$ is $\wt{\Pi}_T=\1_\N$, since $\mc{W}$ contains precisely one element of each block of $\Pi_t$. 

Let $\wt{\Pi}_i^n=\iota_n(\wt{\Pi}_t)$. Suppose that $\wt{\Pi}_i^n$ has $l\in\N$ blocks, and consider any subset $\wt{b}_1,\ldots, \wt{b}_k$ of distinct blocks of $\wt{\Pi}_t$, where $t\geq T$. For each $\wt{b}_i$, (by \eqref{iieq4},
$$b_i=\bigcup\limits_{n\in \wt{b}_i}\bl_{\Pi_t}(n)$$
is a block of $\Pi_t$, and for $i\neq j$, $b_i\cap b_j=\emptyset$. Hence, by Theorem \ref{strongmarkov} (applied at time $t$), and Definition \ref{lambdacoaldef}, the rate at which the $k$-tuple of of blocks $\wt{b}_1,\ldots,\wt{b}_k$ is 
$$\lambda_{l,k}=\int_0^1x^{k}(1-x)^{l-k}\nu(dx).$$ It is straightforward to see that this same rate of coagulation occurs independently for all $k$-tuples of $\wt{b}_1,\ldots,\wt{b}_l$. By Definition \ref{lambdacoaldef}, $(\wt{\Pi}_t)_{t\geq T}$ is a $\nu$-coalecsent.
\end{proof}

We are now in a position to prove our main result, Theorem \ref{uscthm}.

\begin{proof}[Of Theorem \ref{uscthm}] Let $T>0$ be deterministic and set
$$\mc{W}=\{\inf b\- b\in\Pi_T, |b|>1\}.$$
Note that, by Theorem \ref{dettimethm}, $\mc{W}$ satisfies the conditions for Theorem \ref{lsub}. By Theorem \ref{lsub}, the corresponding coalescent $(\wt{\Pi}_t)_{t\geq T}$, defined by \eqref{iieq4}, is a $\nu$-coalescent. By Theorem \ref{sthm} (which applies since $\mu^{-1}<\infty$ implies $\mu^*=\infty$), $(\wt{\Pi}_t)_{t\geq T}$ does not come down from infinity. That is, $$\P\l[\forall t\geq T, |\wt{\Pi}_t|=\infty\r]=1.$$ By \eqref{iieq4}, each block of $\wt{\Pi}_t$ corresponds uniquely to an atomic block of $\Pi_t$, which implies that $\P\l[\forall t\geq T, N^a_t=\infty\r]=1$. Since this holds for each $T=1/n$, where $n\in\N$, we have that $$\P\l[\forall t>0, N^a_t=\infty\r]=1.$$
This completes the proof.
\end{proof}

\subsection*{Acknowledgement}

I am very grateful to both Alison Etheridge and Vlada Limic; the idea for this paper came out of conversation between the three of us. To the best of my knowledge, Theorem \ref{uscthm} was first conjectured by Vlada Limic, who also gave the first example/proof of a $\Lambda$-coalescent with $N^a_t=N^s_t=\infty$ (in an unpublished note, using different methods to the proofs above).

\bibliographystyle{plainnat}
\bibliography{biblio1}

\end{document}